\newtheorem{TheoM}{Theorem}
\newtheorem{TheoP}{Theorem}
\newtheorem*{defi-intro}{Definition}
\newtheorem*{Theo}{Theorem}
\newtheorem{Theorem}{Theorem}[section]
\newtheorem{Proposition}[Theorem]{Proposition}
\newtheorem{Lemma}[Theorem]{Lemma}
\newtheorem{defi}[Theorem]{Definition}
\theoremstyle{definition}
\newtheorem{Example}[Theorem]{Example}
\newcommand{\TM}{\operatorname{Gr}\mathcal{M}}
\theoremstyle{remark}
\newtheorem{Remark}[Theorem]{Remark}
\title{Face module for realizable $\mathbb{Z}$-matroids\footnote{Accepted for publication in Contributions to Discrete Mathematics.}}
\author{Ivan Martino}
\begin{document}

\maketitle

\begin{abstract}
In this work, we define the face module for a realizable matroid over $\mathbb{Z}$. 
Its Hilbert series is, indeed, the expected specialization of the Grothendieck--Tutte polynomial defined by Fink and Moci in \cite{FinkMoci}.
\end{abstract}

A {\em matroid} $M$ is a simplicial complex $\mathcal{I}$ on a {\em ground set} $[n]=\{1,\dots,n\}$, such that
\begin{equation*}%\label{eq-ind-set-exchange}
 	A, B \in \mathcal{I}, \#A>\#B \Rightarrow \exists a\in A\setminus B: B\cup \{a\}\in \mathcal{I}.
\end{equation*}
%$$if A, B \in \mathcal{I}$ and $\#A>\#B$, then $\exists a\in A\setminus B: B\cup \{a\}\in \mathcal{I}.$$ 
%
The latter is called {\em independent set exchange property} and $\mathcal{I}$ is often called {\em independent sets} family. % and the facet of $\mathcal{I}$ are called {\em basis} of the matroid.
%
%\noindent
Matroids encapsulate the combinatorics that underline the arrangements of hyperplanes in affine or projective space. %; surely many other application can be mentioned like optimizations and 

\noindent
There are two classical objects one associates to a matroid: the Stanley-Reisner ring $\mathbf{k}[M]$, that is the face ring of $\mathcal{I}$, and the Tutte polynomial $T_{M}$. %:
%\begin{equation*}
%	T_{M}(x,y)=\sum_{A\subseteq [n]} (x-1)^{r-d(A)}(y-1)^{\#A - d(A)}.
%\end{equation*}
%
They are related by the following result:

\begin{Theo}
	Let $M$ be a matroid of rank $r$ with ground set $[n]$ and call $M^*$ its dual matroid. Then:
    \[
        \operatorname{Hilb}(\mathbf{k}[M],t)=\frac{t^{r}}{(1-t)^{r}} T_{M^*}(1, \nicefrac{1}{t}).
    \]
    where $\operatorname{Hilb}(\mathbf{k}[M],t)$ is the Hilbert series of $\mathbf{k}[M]$.
\end{Theo}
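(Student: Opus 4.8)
The plan is to expand both sides as rational functions in $t$ whose coefficients count the independent sets of $M$ by cardinality, and to check that the two expansions coincide. Throughout, write $I_i$ for the number of independent sets of $M$ with exactly $i$ elements; since $M$ has rank $r$ one has $I_i=0$ for $i>r$, and $I_0=1$.

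First I would recall the standard description of the Hilbert series of a Stanley--Reisner ring: for any simplicial complex $\Delta$ on $[n]$, a monomial survives in $\mathbf{k}[\Delta]$ exactly when its support is a face of $\Delta$, so
\[
  \operatorname{Hilb}(\mathbf{k}[\Delta],t)\;=\;\sum_{F\in\Delta}\ \prod_{j\in F}\frac{t}{1-t}\;=\;\sum_{F\in\Delta}\left(\frac{t}{1-t}\right)^{\#F}.
\]
Taking $\Delta=\mathcal{I}$ and sorting the independent sets by size gives
\[
  \operatorname{Hilb}(\mathbf{k}[M],t)\;=\;\sum_{i=0}^{r}I_i\left(\frac{t}{1-t}\right)^{i}.
\]

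Next I would use the corank--nullity expansion $T_N(x,y)=\sum_{S\subseteq E}(x-1)^{r(E)-r(S)}(y-1)^{\#S-r(S)}$ of the Tutte polynomial of a matroid $N$ on a ground set $E$. Setting $y=1$ kills every term with $\#S-r(S)>0$, i.e.\ every non-independent $S$, while on an independent set $r(S)=\#S$; hence
\[
  T_N(x,1)\;=\;\sum_{S\ \text{independent in }N}(x-1)^{r(N)-\#S}.
\]
Combining this with the duality relation $T_{M^*}(x,y)=T_M(y,x)$ and substituting $x=1/t$ yields
\[
  T_{M^*}\!\left(1,\tfrac1t\right)\;=\;T_M\!\left(\tfrac1t,1\right)\;=\;\sum_{i=0}^{r}I_i\left(\frac{1-t}{t}\right)^{r-i}.
\]

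Finally I would multiply through by $t^{r}/(1-t)^{r}$: the $i$-th term becomes $\frac{t^{r}}{(1-t)^{r}}\cdot I_i\Big(\frac{1-t}{t}\Big)^{r-i}=I_i\frac{t^{i}}{(1-t)^{i}}$, so $\tfrac{t^{r}}{(1-t)^{r}}T_{M^*}(1,\tfrac1t)$ collapses term by term to the expression for $\operatorname{Hilb}(\mathbf{k}[M],t)$ obtained above, which proves the identity.

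I do not expect a genuine obstacle here: the content is a bookkeeping exercise once the two input formulas are in hand. The only points needing care are (i) identifying the faces of the matroid complex with the independent sets and remembering that $M$ has rank $r$, so the top face size is $r$; and (ii) using the corank--nullity normalisation of the Tutte polynomial, so that the exponents really do involve the rank of the \emph{whole} matroid and the specialisation $y=1$ genuinely restricts the sum to independent sets. If one prefers to avoid invoking $T_{M^*}(x,y)=T_M(y,x)$, the same computation can be run directly on the subset expansion of $T_{M^*}(1,1/t)$ using the complementation bijection between the independent sets of $M$ and the spanning sets of $M^*$; I would carry out the shorter duality version in the text and mention this as a remark.
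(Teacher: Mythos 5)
Your proof is correct: the face-wise expansion $\operatorname{Hilb}(\mathbf{k}[M],t)=\sum_{F\in\mathcal{I}}\bigl(\tfrac{t}{1-t}\bigr)^{\#F}$, the corank--nullity specialization at $y=1$, and the duality $T_{M^*}(x,y)=T_M(y,x)$ fit together exactly as you say, and the term-by-term cancellation after multiplying by $t^r/(1-t)^r$ is a legitimate finish. Note, however, that the paper does not actually prove this classical statement at all: its Theorem 1.12 simply cites Bj\"orner's appendix (Section A.3) in De Concini--Procesi, so your argument is a self-contained substitute rather than a rederivation of the paper's proof. The closest argument the paper does write out is the proof of its Theorem B (the $\mathbb{Z}$-matroid generalization), and there the route is slightly different from yours: instead of counting monomials supported on faces, the author passes through the $\mathbf{f}$- and $\mathbf{h}$-vectors, identifies $\sum_i h_i t^{r-i}$ with $T_{\mathcal{M}}(t,1)$, and then invokes Stanley's formula $\operatorname{Hilb}(\mathbf{k}[P],t)=\sum_i h_it^i/(1-t)^r$ for simplicial posets. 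Your direct monomial count is the more elementary option for a genuine Stanley--Reisner ring, but it does not transfer verbatim to the face ring of a simplicial poset (which is not a squarefree monomial quotient), which is precisely what the $\mathbf{h}$-vector/Stanley route buys in the paper's setting; for the classical statement as posed, your version is complete and arguably cleaner.
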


Fink and Moci \cite{FinkMoci} generalize the concept of matroid to a larger setting: a matroid $\mathcal{M}$ over a commutative ring $R$ on the ground set $[n]$ is an assignment of an $R$-module $\mathcal{M}(A)$ for every subset $A$ of $[n]$. This assignment has to respect a certain local patching condition.
%
%\noindent
One of the reason behind this generalization is to deal with arrangements of hypersurfaces. For the goal of this paper, it is worth to recall that a realizable matroid over $\mathbb{Z}$ relates to a generalized toric arrangement \cite{Moci-A-tutte,Dadderio-Moci-Arith}. 
%
%Toric arrangements have been vastly studied 

In this paper, we introduce a candidate for the role of the independent set complex for realizable matroids over $\mathbb{Z}$, that we call {\em partially order set (poset) of torsions} of $\mathcal{M}$. 
This new poset provides a combinatorial tool for (generalized) toric arrangements to compute their integral cohomology \cite{Delucchi-Callegaro-toric}, to construct their wonderful models \cite{Moci-Wonderful-Model} and their projective wonderful models \cite{DeC-Gaiffi-Projective-Wonderful}.
Moreover, this paves the way to show one of Rota's cryptomorphisms for matroids over $\mathbb{Z}$.

%\noindent
Given a matroid $\mathcal{M}$ over $\mathbb{Z}$, $\mathcal{M}(A)$ is an abelian group and so 
$$\mathcal{M}(A)=\mathbb{Z}^{d(A)}\times G_A,$$
where $G_A$ is a torsion group and its cardinality, $\#G_A$, is often referred as the multiplicity of $A$, $m(A)$. Call $d=d(\emptyset)$, the dimension of the matroid $\mathcal{M}$ and $C_A=\operatorname{Hom}(G_A,\mathbb{C}^*)$, the dual group of $G_A$. 

\begin{defi-intro}
	We denote by $\TM$ the set of torsions of $\mathcal{M}$. This is the set of all pairs $(A, l)$ with $d-d(A)=\#A$ and $l\in C_A$.
\end{defi-intro}

\noindent
First, observe that $(\emptyset, e)$ always belongs to $\TM$.
We are going to give a partial order to $\TM$ by defining certain covering relations inspired by the poset of layers of a toric arrangement \cite{Moci-A-tutte,Dadderio-Moci-Arith}.
Similar ad hoc constructions appear also in \cite{Delucchi-Callegaro-toric}.
This order depends on the realization of the matroids $\mathcal{M}$. Indeed, every realization provides a surjective map $\pi:C_{A\cup b}\twoheadrightarrow C_A$. (See Section \ref{sec-coset-of-poset} for further details.)

\begin{defi-intro}
	Let $\mathcal{M}$ be a realized $\mathbb{Z}$-matroid.
	Let $(A\cup \{b\}, h)$ and $(A, l)$ be two elements of $\TM$.
	We say that $(A\cup \{b\}, h)$ covers $(A, l)$ if and only if $\pi(h) = l$.
\end{defi-intro}

This poset is not a simplicial complex, but it is the union of identical simplicial posets. 

\begin{TheoM}\label{thm-simplicial-poset-intro}
	%For every representable matroid $\mathcal{M}$ over $\mathbb{Z}$ with $\mathcal{M}(\emptyset)=\mathbb{Z}^{d}$, $\TM$ is a simplicial poset.
    If $\mathcal{M}$ is a realized $\mathbb{Z}$-matroid, then $\TM$ is a disjoint union of $m(\emptyset)$ simplicial posets isomorphic to the link of $(\emptyset, e)$ in the poset $\TM$.
\end{TheoM}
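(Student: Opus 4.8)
The plan is to split $\TM$ according to the component of $C_{\emptyset}$ that an element "lies over", to check that each piece is a simplicial poset with a least element, and then to move one piece onto another by twisting the character coordinate; I expect a Helly/gluing statement to be the only genuine obstacle.

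\emph{The decomposition.} For $(A,l)$ with $d-d(A)=\#A$ let $\pi_{A}\colon C_{A}\to C_{\emptyset}$ be the composite of the covering maps $\pi$ along a maximal chain from $\emptyset$ to $A$; this is independent of the chain because the maps $\pi$ commute (Section~\ref{sec-coset-of-poset}), and it is \emph{onto}: the realizing vectors indexed by $A$ are $\mathbb{Q}$-linearly independent — which is exactly the condition $d-d(A)=\#A$ — so $G_{\emptyset}\hookrightarrow G_{A}$ and hence $C_{A}=\operatorname{Hom}(G_{A},\mathbb{C}^{*})\twoheadrightarrow \operatorname{Hom}(G_{\emptyset},\mathbb{C}^{*})=C_{\emptyset}$. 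Set $\beta(A,l)=\pi_{A}(l)$. If $(A\cup b,h)$ covers $(A,l)$ then $\pi_{A\cup b}(h)=\pi_{A}(\pi(h))=\pi_{A}(l)$, so $\beta$ is constant along covering relations, hence on connected components; it is surjective because $(\emptyset,g)\in\TM$ for every $g$; and from any $(A,l)$ with $\beta(A,l)=g$ one descends a covering chain ending at $(\emptyset,g)$, so $\beta^{-1}(g)$ is connected and $(\emptyset,g)$ is its least element. Therefore $\TM=\bigsqcup_{g\in C_{\emptyset}}\beta^{-1}(g)$, a disjoint union of $m(\emptyset)$ posets, each with a least element.

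\emph{Each $\beta^{-1}(g)$ is simplicial.} I claim $[(\emptyset,g),(A,l)]$ is always Boolean. If $(A,l)\ge(B,k)$, unwinding the covering relations forces $B\subseteq A$ and $k=\pi_{A\to B}(l)$. Conversely every $B\subseteq A$ gives $(B,\pi_{A\to B}(l))\le(A,l)$: the set $B$ again satisfies the rank condition because $d(\cdot)$ drops by $0$ or $1$ at each one‑element extension (as $\mathcal{M}$ passes to a quotient there), so a total drop of $\#A$ from $\emptyset$ to $A$ forces a drop of exactly $1$ at every step, hence along every subchain; and a chain deleting $A\setminus B$ one element at a time realizes the inequality. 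So the interval equals $\{(B,\pi_{A\to B}(l)):B\subseteq A\}$, order‑isomorphic to $2^{A}$, and $\beta^{-1}(g)$ is a simplicial poset.

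\emph{Identifying the pieces with the link.} Since $(\emptyset,e)$ is the least element of its component, its link in $\TM$ is precisely $\beta^{-1}(e)$. Fix $g$, and choose \emph{coherently in $A$} elements $t_{A}\in\pi_{A}^{-1}(g)$, meaning $\pi(t_{A\cup b})=t_{A}$ whenever $A$ and $A\cup b$ both satisfy the rank condition. Then $(A,l)\mapsto(A,\,l\cdot t_{A})$ is a bijection $\beta^{-1}(e)\to\beta^{-1}(g)$ preserving covers, because $\pi(h)=l$ yields $\pi(h\cdot t_{A\cup b})=\pi(h)\,\pi(t_{A\cup b})=l\cdot t_{A}$. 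Combined with the two previous steps this proves the theorem, modulo the coherent choice of the $t_{A}$.

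\emph{The hard part.} One builds $(t_{A})$ by induction on $\#A$, and the inductive step for a set $A$ reduces to the surjectivity of $C_{A}\to\varprojlim_{B\subsetneq A}C_{B}$ for $A$ obeying the rank condition — equivalently, viewing $C_{A}=\pi_{0}\operatorname{Hom}(\mathcal{M}(A),\mathbb{C}^{*})$ with all these groups of components sitting inside $\pi_{0}\operatorname{Hom}(\mathcal{M}(\emptyset),\mathbb{C}^{*})$, to the Helly‑type assertion that pairwise‑compatible connected components of the groups $\operatorname{Hom}(\mathcal{M}(A\setminus b),\mathbb{C}^{*})$ have a common point. I would deduce this from the $\mathbb{Q}$‑linear independence of the realizing vectors over $A$: it makes the relevant saturated sublattices $L_{S}=\widetilde{\langle v_{i}:i\in S\rangle}$ satisfy $L_{S}\cap L_{S'}=L_{S\cap S'}$ and lie "in general position", and then a Mayer--Vietoris/\v{C}ech computation for the associated characters — glueing the pairwise‑compatible local characters and extending the result back to $\mathcal{M}(\emptyset)$ using divisibility of $\mathbb{C}^{*}$ — gives the required surjectivity. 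I expect essentially all the real work to sit in this last glueing; everything preceding it is bookkeeping.
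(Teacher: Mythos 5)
Your first two steps are sound and are, in substance, the paper's own argument: your map $\beta$ and the statement that each fiber $\beta^{-1}(g)$ has least element $(\emptyset,g)$ and Boolean intervals is exactly the content of Propositions \ref{prop-the-covering-is-nice} and \ref{prop-every-subset-is-there} and Theorem \ref{thm-simplicial-poset} (uniqueness of the covered element, existence of $(B,\pi_{A\to B}(l))$ for every $B\subseteq A$ via the rank argument in the $\mathbb{Q}$-matroid). Your third step is also the same device the paper uses: the paper identifies $\operatorname{link}_{\TM}(\emptyset,e)$ with $\operatorname{Gr}(\mathcal{M}')$ for $\mathcal{M}'(A)=\nicefrac{\mathcal{M}(A)}{G_\emptyset}$ and sends $(E,l)\mapsto(E,cl)$, which is precisely your twist $(A,l)\mapsto(A,\,l\cdot t_A)$ once one has interpreted $c$ inside each $C_E$.

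The genuine gap is that you never prove the one statement on which your isomorphism rests: the existence of the coherent family $(t_A)$ with $\pi(t_{A\cup b})=t_A$ and $t_\emptyset=c$. Note that on a single basis $B$ this is free (restrict any $t_0\in C_B$ lying over $c$ to all subsets of $B$), so the entire difficulty is the agreement of such choices across different bases sharing an independent set --- equivalently the surjectivity of $C_A\to\varprojlim_{B\subsetneq A}C_B$ (equivalently, injectivity of $\operatorname{colim}_{B\subsetneq A}G_B\to G_A$, after which divisibility of $\mathbb{C}^*$ finishes), which you yourself call the hard part. What you offer for it is a plan, not a proof: the asserted identity $L_S\cap L_{S'}=L_{S\cap S'}$ for the saturated sublattices is stated without argument (and is exactly the kind of intersection statement that needs care, since in general $G_{A\setminus i}\cap G_{A\setminus j}$ inside $G_A$ need not obviously equal $G_{A\setminus\{i,j\}}$), and the ``Mayer--Vietoris/\v{C}ech computation'' that is supposed to convert pairwise compatibility into global compatibility is not carried out, nor is it explained why the relevant obstruction group vanishes. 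Since without the coherent $t_A$ you have no map $\beta^{-1}(e)\to\beta^{-1}(g)$ at all, the conclusion ``each piece is isomorphic to the link of $(\emptyset,e)$'' is not established by your text. To close the gap you must either prove the gluing statement you invoke, or do what the paper does and work through the quotient matroid $\mathcal{M}'$, making precise why the twist $(E,l)\mapsto(E,cl)$ is well defined and order preserving; as written, your proposal defers exactly the step that carries the content of Theorem \ref{thm-simplicial-poset-intro} beyond the free case.
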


\noindent
As a byproduct, one can reproduce many of the results of Section 5 and 6 of \cite{Moci-A-tutte}, but with the payback of losing part of the geometrical intuition.

From the poset $\TM$ we define a {\em face module} $\mathbf{k}[\mathcal{M}]$ associated to $\mathcal{M}$. 
For this, we use Stanley's construction \cite{Stanley-f-vector} of the face ring for simplicial posets.
As proof that $\TM$ is the right combinatorial object to study, we also show that the Hilbert series of its face \emph{module} is the specialization of the Grothendieck--Tutte polynomial, as in the classical case:
\begin{TheoM}\label{thm-main-result}
	If $\mathcal{M}$ ia a realizable $\mathbb{Z}$-matroid of rank $r$, then
    \[
    \operatorname{Hilb}(\mathbf{k}[\mathcal{M}],t)=\frac{t^{r}}{(1-t)^{r}} T_{\mathcal{M}^*}(1, \nicefrac{1}{t}).
    \]
\end{TheoM}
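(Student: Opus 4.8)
The plan is to compute $\operatorname{Hilb}(\mathbf{k}[\mathcal{M}],t)$ directly from the combinatorial structure of $\TM$ and match it term by term with the right-hand side. By Theorem~\ref{thm-simplicial-poset-intro}, $\TM$ splits as a disjoint union of $m(\emptyset)$ copies of the link $P$ of $(\emptyset,e)$, and $P$ is a simplicial poset. Stanley's construction of the face ring for a simplicial poset gives, additively, a free module with one basis element in each graded piece indexed by a chain (or rather by a face) of the poset, with the degree determined by the rank; hence the Hilbert series of the face \emph{module} will be $m(\emptyset)$ times the usual simplicial-poset Hilbert series of $P$, namely $\sum_{\sigma\in P} t^{|\sigma|}/(1-t)^{|\sigma|}$ after the standard rewriting, or equivalently $\sum_i f_{i-1}(P)\, t^i/(1-t)^i$ where $f_{i-1}(P)$ counts the rank-$i$ elements of $P$. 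So the first step is to pin down this $f$-vector count: the rank-$i$ elements of $\TM$ are the pairs $(A,l)$ with $|A|=i$, $d-d(A)=i$, and $l\in C_A$, and there are $\sum_{A:\,|A|=d-d(A)=i}\#C_A=\sum_{A} m(A)$ many of them (summing over "independent-type" subsets $A$ of size $i$).

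The second step is to turn this face-count into a Tutte-polynomial evaluation. The Grothendieck--Tutte polynomial of Fink--Moci specializes so that $T_{\mathcal{M}}(x,y)=\sum_{A\subseteq[n]} m(A)\,(x-1)^{\,d-d(A)-\text{(something)}}(y-1)^{\,|A|-\text{(something)}}$ — I would recall the exact definition from \cite{FinkMoci}, the point being that it is the natural $m$-weighted analogue of Whitney's rank-generating formula. The classical identity quoted in the excerpt (the Theorem relating $\operatorname{Hilb}(\mathbf{k}[M],t)$ to $T_{M^*}(1,1/t)$) is proved by exactly this kind of bookkeeping: one expands $T_{M^*}(1,1/t)$, uses $T_{M^*}(x,y)=T_M(y,x)$, and recognizes the coefficients as counting independent sets of $M$ by size. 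I would replay that argument verbatim with $m(A)$ inserted as a weight: expand $\frac{t^r}{(1-t)^r}T_{\mathcal{M}^*}(1,1/t)$, use the duality $T_{\mathcal{M}^*}(x,y)=T_{\mathcal{M}}(y,x)$ for matroids over $\mathbb{Z}$ (established in \cite{FinkMoci}), and check that setting the first variable to $1$ kills all terms except those with $|A|=d-d(A)$ (the "torsion" subsets), leaving precisely $\sum_i \bigl(\sum_{A:\,|A|=d-d(A)=i} m(A)\bigr) t^i/(1-t)^i$, with an overall factor $m(\emptyset)$ emerging from the rank-$r$ normalization and the relation between $r$, $d$, and $d(\emptyset)$.

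The third step is simply to observe that the two expressions obtained in steps one and two coincide, which finishes the proof. I expect the main obstacle to be step two: getting the degree exponents and the normalizing power of $t$ exactly right, since the Fink--Moci Tutte polynomial carries the extra multiplicity weight and the passage to the dual $\mathcal{M}^*$ interacts with both the rank function $d(\cdot)$ and the multiplicities $m(\cdot)$ in a way that must be checked carefully (in particular one needs $d(\mathcal{M}^*\text{ of }A)$ and $m(\mathcal{M}^*\text{ of }A)$ in terms of the data of $\mathcal{M}$, plus the fact that the "specialization $x=1$" really does isolate the torsion subsets $A$ with $d-d(A)=|A|$ that index $\TM$). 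A secondary but routine point is confirming that Stanley's face-ring construction, when applied to the \emph{disjoint union} of simplicial posets rather than a single one, yields a module whose Hilbert series is literally the sum of the pieces — i.e. that $\mathbf{k}[\mathcal{M}]$ is defined so that the $m(\emptyset)$ components contribute additively — which is where the "module" rather than "ring" language in Theorem~\ref{thm-main-result} does its work.
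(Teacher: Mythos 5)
Your proposal is correct and is essentially the paper's own argument: reduce to the link of $(\emptyset,e)$ via Theorem \ref{thm-simplicial-poset-intro} (additivity of the Hilbert series over the $m(\emptyset)$ components), identify $f_{i-1}(\TM)=\sum m(A)$ summed over the subsets $A$ with $\#A=\operatorname{cork}(A)=i$, apply Stanley's Hilbert-series formula, and match with $\frac{t^{r}}{(1-t)^{r}}T_{\mathcal{M}^*}(1,\nicefrac{1}{t})=\frac{t^{r}}{(1-t)^{r}}T_{\mathcal{M}}(\nicefrac{1}{t},1)$, where the specialization to $1$ kills all terms except those same independent-type subsets. The only cosmetic difference is that you use the $\mathbf{f}$-vector form $\sum_i f_{i-1}\,t^i/(1-t)^i$ of Stanley's result, whereas the paper passes through the $\mathbf{h}$-vector via Theorem \ref{Thm-Stanley}; the two are equivalent by the defining relation between $\mathbf{f}$ and $\mathbf{h}$.
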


\noindent
The Grothendieck--Tutte polynomial for matroid over a ring has been defined by Fink and Moci \cite{FinkMoci} as a function of $\mathcal{M}$ in a certain  Grothendieck ring of matroids, but, in our setting, $T_{\mathcal{M}}$ is more concretely the arithmetic Tutte polynomial, see \cite{Dadderio-Moci-Arith}. Precisely,
\begin{equation*}
	T_{\mathcal{M}}(x,y)=\sum_{A\subseteq [n]} m(A) (x-1)^{r-\operatorname{cork}(A)}(y-1)^{\#A - \operatorname{cork}(A)},
\end{equation*}
where $\operatorname{cork}(A)=d(\emptyset)-d(A)$.

\noindent
It is worth to mention that it is not clear if the poset of torsion is uniquely defined for non realizable matroids. 
On the other hands, the proof of Theorem \ref{thm-main-result} holds easily for any simplicial poset with the correct $\mathbf{f}$-vector.
Therefore we conjecture that Theorem \ref{thm-main-result} holds for every $\mathbb{Z}$-matroid.
See more about this in Remark \ref{remark-conjecture}.

%\begin{Conjecture}
%	Let $\mathcal{M}$ be a $\mathbb{Z}$-matroid of rank $r$ with ground set $[n]$. 
%	%
%	%Assume $\mathcal{M}(\emptyset)$ has no torsion.
%	%
%	Let $\TM$ be the simplicial poset of torsions of $\mathcal{M}$ and denote its face ring by $\mathbf{k}[\mathcal{M}]$.
%	%
%	Then 
%	\[
%	\operatorname{Hilb}(\mathbf{k}[\mathcal{M}],t)=\frac{t^{r}}{(1-t)^{r}} T_{\mathcal{M^*}}(1, \nicefrac{1}{t}).
%	\]
%\end{Conjecture}
%
%The main obstacle to show this result is hidden in the definition of the simplicial poset of torsion $\TM$. 

%\vspace{0.1cm}
%\medskip%{0.1cm}
The paper is organized as follows: in Section \ref{sec-basic-notion} we recall all the basic notions needed for a full comprehension of the results. In Section \ref{sec-coset-of-poset} we define the poset of torsions and in Section \ref{sec-simplicial-poset} we prove Theorem \ref{thm-simplicial-poset-intro}. Finally, in Section \ref{sec-main-result} we show Theorem \ref{thm-main-result}.  
%\vspace{0.1cm}
%\medskip%{0.1cm}
\paragraph{Acknowledgements.} 
The author thanks an anonymous referee for several fruitful observations and comments.
A naive version of this poset was discussed with Emanuele Delucchi, Matthias Lenz and Luca Moci. 
The author thanks Matthias Lenz and Matthew Stamps for several inspiring discussions. The author is also grateful to Emanuele Delucchi for the best introduction to the matroid world, and Alex Fink and Luca Moci for the email exchanges on their result on matroids over a ring.
Finally, the author thanks Emanuele Delucchi and Alex Fink for pointing out a mistake in the previous version of the paper.

The author has been partially supported by the \emph{Swiss National Science Foundation Professorship} grant PP00P2\_150552/1. 
Moreover, has been partially supported by the \emph{Zelevinsky Research Instructor Fund}.

\noindent
Currently, the author is supported by the \emph{Knut and Alice Wallenberg Fundation} and by the \emph{Royal Swedish Academy of Science}.

\section{Basic notions}\label{sec-basic-notion}

\subsection{Simplicial posets}
%\paragraph{Simplicial posets.}
Let $(P, <)$ be a finite partially ordered set (poset).
A poset with a unique initial element, denoted by $\hat{0}$, is said to be {\em simplicial} if for each $\sigma\in P$ the segment $[\hat{0}, \sigma]=\{x\in P: \hat{0}\leq x \leq \sigma\}$ is a boolean lattice.
We say that the \emph{rank} of $[\hat{0}, \sigma]$ is the length of its maximal chain; therefore $(P, <)$
has a natural rank function $\operatorname{rk}$ induced by the rank of the segments $[\hat{0}, \sigma]$.
We denote by $r$ the rank of $P$, the maximal rank among all its segments.%, and by $d=r-1$ the dimension of $P$, that is the dimension of the order complex of $P$.

\noindent
For any $\sigma$ and $\tau$ in $P$, $\sigma\wedge\tau$ is the set of their greatest lower bounds ({\em meets}) and $\sigma\vee\tau$ is the set of their least common upper bounds ({\em joins}).
For a simplicial poset, $\sigma\wedge\tau$ is a singleton and by abuse of notation we identify the $\sigma\wedge\tau$ with the unique greatest lower bound of $\sigma$ and $\tau$.

\begin{Example}\label{ex:basic-simplicial-poset}
    Consider the set given by $P_1=\{\hat{0}, a, b, 1, 2\}$, where every number is greater or equal to every letter and every element is greater or equal to $\hat{0}$, see Figure \ref{fig:poset-example}.a).
    This is a simplicial poset. It is not the face poset of any simplicial complex, but it is the face poset of a digon, a CW-complex shown in Figure \ref{fig:poset-example}.b). Its order complex is a triangulation of the one dimensional sphere: see Figure \ref{fig:poset-example}.c). 
    We compute few examples of meets and joins that are useful in future computations: $1\wedge a = \{a\}$, $a \wedge b = \{\hat{0}\}$, $a \vee b = \{1, 2\}$ and $1 \vee 2 = \emptyset$.
\end{Example}

\begin{figure}[htb]
\begin{center}
\begin{tabular}{m{3cm}m{2cm}m{3cm}m{3cm}}
        \begin{tikzpicture}[scale=0.6, align=center]
            \node (1) at (0,0) {$1$};
            \node (2) at (3,0) {$2$};
            \node (a) at (0, -2) {$a$};
            \node (b) at (3, -2) {$b$};
            \node (0) at (1.5, -4) {$\hat{0}$};
            \draw (1) -- (a);
            \draw (2) -- (a);
            \draw (1) -- (b);
            \draw (2) -- (b);
            \draw (a) -- (0);
            \draw (b) -- (0);
        \end{tikzpicture}
&
\begin{tikzpicture}[scale=.6, align=center]
            \node (a) at (0,0) {$a$};
            \node (b) at (0, -4) {$b$};
            \draw [thick, shorten <=-2pt, shorten >=-2pt] (a) to [out=-90, in=+60] (b);
            \draw [thick, shorten <=-2pt, shorten >=-2pt] (a) to [out=-90, in=+120] (b);
        \end{tikzpicture}
&        \begin{tikzpicture}[scale=.6, align=center]
            \node (1) at (0,0) {$1$};
            \node (2) at (3, -2) {$2$};
            \node (a) at (0, -2) {$a$};
            \node (b) at (3,0) {$b$};
            \draw (1) -- (a);
            \draw (2) -- (a);
            \draw (1) -- (b);
            \draw (2) -- (b);
        \end{tikzpicture}
&		\begin{tikzpicture}[scale=0.6, align=center]
            \node (1) at (0,0) {$1$};

 			\node (a) at (-3, -2) {$a$};
            \node (b) at (0, -2) {$b$};
 			\node (c) at (3, -2) {$c$};
 
            \node (0) at (0, -4) {$\hat{0}$};

			\draw (1) -- (a);
            \draw (1) -- (b);
            \draw (1) -- (c);
  
            \draw (a) -- (0);
            \draw (b) -- (0);
            \draw (c) -- (0);
        \end{tikzpicture}\\
% The poset in Example \ref{ex:basic-simplicial-poset}
% &
% The CW-complex with face poset $P$
% &
% The order complex of $P$
 \qquad \, a) & \hspace{0.3cm} b) & \qquad \, c) &  \hspace{1.75cm} d)
\end{tabular}
\end{center}
\caption{a) The poset $P_1$ in Example \ref{ex:basic-simplicial-poset}; b) The CW-complex with face poset $P_1$; c) The order complex of $P_1$; d) The poset $P_2$ in Example \ref{ex:basic-non-simplicial-poset-simple}.}
   \label{fig:poset-example}
\end{figure}
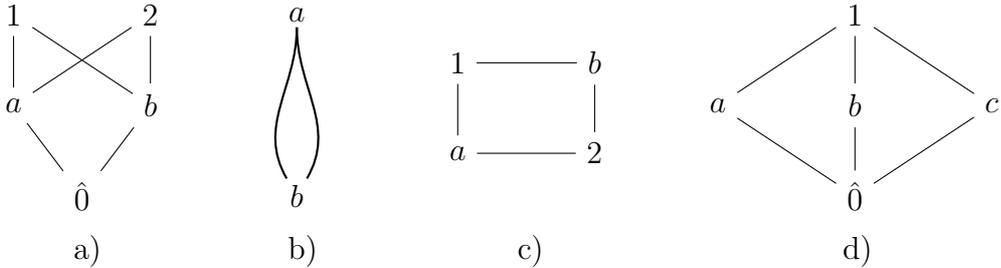

% \begin{figure}
% %     \begin{tabular}{cc}
% \begin{center}
%         \begin{tikzpicture}[scale=.6, align=center]
%             \node (1) at (0,0) {$1$};
%             \node (2) at (3,0) {$2$};
%             \node (a) at (0, -2) {$a$};
%             \node (b) at (3, -2) {$b$};
%             \node (0) at (1.5, -4) {$\hat{0}$};
%             \draw (1) -- (a);
%             \draw (2) -- (a);
%             \draw (1) -- (b);
%             \draw (2) -- (b);
%             \draw (a) -- (0);
%             \draw (b) -- (0);
%         \end{tikzpicture}
%       \qquad \qquad
% %         & %
%         \begin{tikzpicture}[scale=.6, align=center]
%             \node (a) at (0,0) {$a$};
%             \node (b) at (0, -4) {$b$};
%             \draw [thick, shorten <=-2pt, shorten >=-2pt] (a) to [out=-90, in=+60] (b);
%             \draw [thick, shorten <=-2pt, shorten >=-2pt] (a) to [out=-90, in=+120] (b);
%         \end{tikzpicture}
% \end{center}
%         \caption{The poset in Example \ref{ex:basic-simplicial-poset}.%
%         %
%          \qquad The CW-complex with face poset $S$.}
%         %\label{fig:CW-complex-poset-example}
% %     \end{tabular}
%    \label{fig:poset-example}
%       \end{figure}

\begin{Example}\label{ex:basic-non-simplicial-poset-simple}
    Consider the set $P_2=\{\hat{0}, a, b, c, 1\}$ with the same order law given for $P_1$, see Figure \ref{fig:poset-example}.d).
    This is not a simplicial poset, because $[\hat{0}, 1]$ is not boolean.
    
% i1 : R=ZZ/101[x,y,z,w]

% o1 = R

% o1 : PolynomialRing

% i2 : I=ideal(y*z, y*w,z*w)

% o2 = ideal (y*z, y*w, z*w)

% o2 : Ideal of R

% i3 : hilbertSeries I

%            2     3
%      1 - 3T  + 2T
% o3 = -------------
%                4
%         (1 - T)

% o3 : Expression of class Divide

% i4 : 
\end{Example}

\subsection{Face ring}
%\paragraph{Face ring.}
Given a field $\mathbf{k}$, we set the polynomial ring $R_P=\mathbf{k}[x_{\sigma}: \sigma \in P]$ where $x_{\sigma}$ has degree $\operatorname{rk}{\sigma}$. In this recap section we are going to follow the notation in \cite{Stanley-f-vector}.

\begin{defi}
  The {\em face ideal} of a simplicial poset $P$ is the ideal of $R_P$ defined as
  \[
    I_{P}=\left(x_{\hat{0}}-1,\, x_{\sigma}x_{\tau} - x_{\sigma\wedge\tau}\left(\sum_{\gamma\in\sigma\vee\tau} x_{\gamma} \right) \mbox{ for any } \sigma, \tau \in P \right).
  \]
  As a notation, the sum $\sum_{\gamma\in\sigma\vee\tau} x_{\gamma}$ is zero if $\sigma\vee\tau=\emptyset$.
  Moreover, the {\em face ring} of a simplicial poset $P$ is the quotient 
  \[
    \mathbf{k}[P]=\frac{\mathbf{k}[x_{\sigma}: \sigma \in P]}{I_P}.
  \]
\end{defi}

\begin{Example}\label{ex:basic-simplicial-poset-ideal-computation}
    Consider the simplicial poset $P_1$ defined in Example \ref{ex:basic-simplicial-poset}, $R_{P_1}$ is the polynomial ring $\mathbf{k}[x_{\hat{0}}, x_a, x_b, x_1, x_2]$.
    
    The ideal $I_{P_1}=\left\langle x_{\hat{0}}-1, x_a x_b-(x_1+x_2), x_1x_2\right\rangle$. Indeed, the basis generators are obtained by substituting respectively $(a,b)$ and $(1,2)$ to the pair the $(\sigma, \tau)$ into $x_{\sigma}x_{\tau} - x_{\sigma\wedge\tau}\left(\sum_{\gamma\in\sigma\vee\tau} x_{\gamma} \right)$. 
    For all other values of $(\sigma, \tau)$, the previous relation is trivial.
    %
    %\noindent
    The face ring of $P_1$ is therefore defined as the quotient
    \[
        \mathbf{k}[P_1]=\frac{\mathbf{k}[x_{\hat{0}}, x_a, x_b, x_1, x_2]}{\left(x_{\hat{0}}-1, x_a x_b-(x_1+x_2), x_1x_2\right)}.
    \]
\end{Example}

This definition generalizes the {\em Stanley--Reisner ring} of a simplicial complex.
Given an abstract simplicial complex $\Delta$ on $n$ vertices its {\em Stanley--Reisner ring} $\mathbf{k}[\Delta]$ the following quotient ring $\mathbf{k}[\Delta]=\nicefrac{\mathbf{k}[x_1,\dots, x_n]}{I_{\Delta}}$, where %$I_{\Delta}$ is  the ideal generated by the square-free monomials corresponding to the non-faces of $\Delta$, i.e. 
$I_{\Delta}=\left\langle x_{i_1}\ldots x_{i_r}: \{i_1,\dots,i_r\}\notin\Delta \right\rangle$. 
%
%\noindent
One can easily check that the two face ring definitions coincide in the case of an abstract simplicial complex.

\begin{Example}\label{ex:basic-simplicial-Fig2-a}
	Consider the simplicial poset $P_2$ defined in Figure \ref{fig:poset-coset-example}.a).
    This is the face poset of a simplicial complex. Precisely, the path graph on $3$ elements.
    Its face ring, $\mathbf{k}[P_2]$, is isomorphic to $\nicefrac{\mathbf{k}[x, y, z]}{(xz)}$.
%     \[
%         \mathbf{k}[P_2]=\frac{\mathbf{k}[x, y, z]}{(xz)}.
%     \]
\end{Example}

From now on, we assume that $P$ is a simplicial poset, and that $\Delta$ is an abstract simplicial complex and $r$ denotes their ranks.

% \[
%     I_{\Delta}=\left\langle x_{i_1}\ldots x_{i_r}: \{i_1,\dots,i_r\}\notin\Delta \right\rangle
% \]
% and 
% \[    
%     \mathbf{k}[\Delta]=\nicefrac{k[x_e: e\in E]}{I_{\Delta}}. 
% \]

%$[n]=\{1,\dots, n\}$

\subsection{The Hilbert series of the face ring}
%\paragraph{Hilbert series of the face ring.}
%Setting the degree of the variable $x_\sigma$ equal the rank of $\sigma$, $\mathbf{k}[P]$ is graded rings. 
%
%For these, one can define a dimension counting series called {\em Hilbert series}.

%\begin{defi}
Let $N$ be a finitely generated $\mathbb{N}$-graded $A$-module where $A$ is a finitely generated $\mathbb{N}$-graded commutative algebra over $\mathbf{k}$. %, i.e. $A=\oplus_{i\geq 0} A_i$ where 
Denote by $N_i$ the homogeneous part of degree $i$.
The Hilbert series of $N$ is the following generating function:
\[
\operatorname{Hilb}(N, t)=\sum_{i\geq 0} \operatorname{dim}_{\mathbf{k}}(N_i)t^i,
\]
where $\operatorname{dim}_{\mathbf{k}}(N_i)$ is the dimension of $N_i$ as a $\mathbf{k}$-vector space.
We consider $\operatorname{Hilb}(A, t)$ as the Hilbert series of $A$ seen as a module over its self.
%
%Let $A$ be a finitely generated $\mathbb{N}$-graded commutative algebra over $\mathbf{k}$ %, i.e. $A=\oplus_{i\geq 0} A_i$ where 
%and denote by $A_i$ the homogeneous part of degree $i$.
%  %
%  The Hilbert series of $A$ is the following generating function:
%  \[
%    \operatorname{Hilb}(A, t)=\sum_{i\geq 0} \operatorname{dim}_{\mathbf{k}}(A_i)t^i,
%  \]
%  where $\operatorname{dim}_{\mathbf{k}}(A_i)$ is the dimension of $A_i$ as a $\mathbf{k}$-vector space.
%%\end{defi}

The ring $\mathbf{k}[P]$ is graded and its Hilbert series encodes many combinatorial objects, like the \emph{$\mathbf{f}$-vector} and the \emph{$\mathbf{h}$-vector}. 
Here, we recall shortly their definitions. 
The $\mathbf{f}$-vector, $\mathbf{f}(P)$, of a simplicial poset $P$ is the vector $(f_{-1}, f_0, \dots, f_{r-1})$ where $f_{i}$ is the number of elements of rank $i+1$ in $P$; by notation $f_{-1}=1$ counts the empty set as a dimension $-1$ object. 
The $\mathbf{h}$-vector of $P$ is the vector $\mathbf{h}(P)=(h_0, h_1, \dots, h_r)$ defined recursively from the $\mathbf{f}$-vector by using $\sum_{i=0}^{r} f_{i-1}(t-1)^{r-i}=\sum_{i=0}^r h_{i}t^{r-i}$.
%here, if necessary write here about h polynomial.

\begin{Example}\label{ex:basic-simplicial-poset-f-h-computation}
    We compute $\mathbf{f}(P_1)$ and $\mathbf{h}(P_1)$ for the simplicial poset in Example \ref{ex:basic-simplicial-poset}.
    Trivially, $\mathbf{f}(P_1)=(1,2,2)$. Expanding $\sum_{i=0}^2 f_{i-1}(t-1)^{2-i}$ one gets $t^2+1$ and therefore $\mathbf{h}(P_1)=(1,0,1)$.
\end{Example}

\begin{Example}\label{ex:basic-simplicial-Fig2-a-f-h-computation}
    Let us make similar computation for $P_2$ in Example \ref{ex:basic-simplicial-Fig2-a}. Clearly $\mathbf{f}(P_2)=(1,3,2)$ and by expanding $\sum_{i=0}^2 f_{i-1}(t-1)^{2-i}=t^2+t$ and therefore $\mathbf{h}(P_2)=(1,1,0)$.
\end{Example}

% \noindent
% For an abstract simplicial complex $\Delta$ on $n$ vertices, one can show that 
% \[
%   \operatorname{Hilb}(\mathbf{k}[\Delta], t) = \frac{\sum_{i=0}^r f_{i-1} t^i(1-t)^{n-i}}{(1-t)^n}= \frac{h_0 + h_1 t + \cdots + h_r t^{r}}{(1-t)^r}.
% \]
% % %
% % Moreover,
% % \[
% %   \operatorname{Hilb}(\mathbf{k}[\Delta], t) = \frac{h_0 + h_1 t + \cdots + h_r t^{r}}{(1-t)^r}.
% % \]
% %The proof of the latter result 
% %-Garsia \cite{Garsia-Combinatorial-methods}
% %-Kind and Kleinschmidt \cite{Kind-Kleinschmidt-Komplexe}
% %-Stanley \cite{Stanley-Combinatorics-commutative-algebra}

% \begin{Example}\label{ex:basic-simplicial-Fig2-a-Hilbert-series}
%     In the case of the face poset $P_2$, it is easy to verify what we have just stated.
%     %
%     Indeed, in Example \ref{ex:basic-simplicial-Fig2-a}, the face ring and in Example \ref{ex:basic-simplicial-poset-f-h-computation} we computed the face ring, the $\mathbf{f}$-vector and $\mathbf{h}$-vector.
%     %
%     %\noindent
%     It is trivial to observe the following:
%     \[
%         \operatorname{Hilb}\left(\frac{\mathbf{k}[x, y, z]}{(xz)}, t\right) = \frac{1-t^2}{(1-t)^3}=\frac{1+t}{(1-t)^2}.
%     \]
% \end{Example}

% \noindent
% Stanley extended the latter result to simplicial posets:
%
\noindent
As said, one can read the \emph{$\mathbf{f}$-vector} and the \emph{$\mathbf{h}$-vector} from the face ring $\mathbf{k}[P]$.

\begin{Theorem}[Proposition 3.8 of \cite{Stanley-f-vector}]\label{Thm-Stanley}
  Let $P$ be a simplicial poset of rank $r$ and let $\mathbf{k}[P]$ be its face ring. Then
  \[
    \operatorname{Hilb}(\mathbf{k}[P], t)=\frac{h_0 + h_1 t + \cdots + h_r t^r}{(1-t)^r}.
  \]
\end{Theorem}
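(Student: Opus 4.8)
The plan is to prove Theorem \ref{Thm-Stanley} by a standard argument: exhibiting a monomial $\mathbf{k}$-basis of $\mathbf{k}[P]$ indexed by the chains of $P$, and then repackaging the resulting generating function. The key observation is that the defining relations $x_\sigma x_\tau = x_{\sigma\wedge\tau}\bigl(\sum_{\gamma\in\sigma\vee\tau} x_\gamma\bigr)$ (together with $x_{\hat 0}=1$) allow one to rewrite any monomial in the $x_\sigma$ so that it is supported on a chain: whenever a monomial involves two variables $x_\sigma,x_\tau$ with $\sigma,\tau$ incomparable, one uses the relation to replace $x_\sigma x_\tau$ by a sum of monomials whose total ``support'' is strictly closer to a chain (in an appropriate well-founded sense — e.g.\ one can induct on the rank of the top element, as $\sigma\vee\tau$ lies strictly above both $\sigma$ and $\tau$ when nonempty, or the monomial dies when $\sigma\vee\tau=\emptyset$). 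Hence $\mathbf{k}[P]$ is spanned by the ``chain monomials'' $x_{\sigma_1}^{a_1}\cdots x_{\sigma_k}^{a_k}$ with $\hat 0 < \sigma_1 < \cdots < \sigma_k$ and $a_i\ge 1$.

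Next I would argue these chain monomials are $\mathbf{k}$-linearly independent, so they form a basis. The cleanest route is to produce enough $\mathbf{k}$-algebra homomorphisms $\mathbf{k}[P]\to \mathbf{k}[\,\cdot\,]$ separating them, or — following Stanley — to set up a flat/filtration argument (or a direct dimension count against the order complex $\Delta(P)$, whose Stanley--Reisner ring has the same Hilbert function once one checks the leading behavior matches). Since the excerpt only asks us to quote this as Proposition 3.8 of \cite{Stanley-f-vector}, in the write-up I would be content to sketch the spanning step carefully and cite \cite{Stanley-f-vector} for linear independence, as that is the genuinely technical part. Granting the basis, the Hilbert series computation is bookkeeping: the chains of $P$ of length $k$ (i.e.\ $\hat 0<\sigma_1<\cdots<\sigma_k$) that are \emph{saturated}, or rather all chains, contribute, and it is cleanest to organize by the \emph{faces} of the order complex. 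Concretely, each chain monomial of multidegree contributing $x_{\sigma_1}^{a_1}\cdots x_{\sigma_k}^{a_k}$ has degree $\sum a_i \operatorname{rk}(\sigma_i)$, and summing the geometric series $\prod_{i=1}^k \frac{t^{\operatorname{rk}\sigma_i}}{1-t^{\operatorname{rk}\sigma_i}}$ over chains does not obviously telescope into $\frac{\sum h_i t^i}{(1-t)^r}$ in one line.

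The main obstacle — and the reason Stanley's proof is not a one-liner — is exactly this last repackaging: converting the chain-indexed sum into the clean rational form with denominator $(1-t)^r$. The trick is to pass through the $\mathbf{f}$-vector: group the chain monomials $x_{\sigma_1}^{a_1}\cdots x_{\sigma_k}^{a_k}$ according to which maximal element $\sigma_k$ they sit under, or more efficiently, observe that the basis monomials supported \emph{inside the boolean interval} $[\hat 0,\sigma]$ for a fixed top $\sigma$ of rank $j$ contribute $t^{?}/(1-t)^{j}$-type terms; summing the count of rank-$j$ elements, which is $f_{j-1}$, gives $\operatorname{Hilb}(\mathbf{k}[P],t) = \sum_{j=0}^{r} f_{j-1}\,\frac{t^j}{(1-t)^j}$ after regrouping by the \emph{minimal} face of each chain's support appropriately — and then the identity $\sum_j f_{j-1} t^j (1-t)^{r-j} = \sum_i h_i t^i$, which is precisely the defining recursion for the $\mathbf h$-vector stated in the excerpt, finishes the proof. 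I expect that verifying the intermediate identity $\operatorname{Hilb}(\mathbf{k}[P],t)=\sum_{j=0}^r f_{j-1}\frac{t^{j}}{(1-t)^{j}}$ — i.e.\ that every basis monomial is counted exactly once when organized by the support chain and its boolean closure — is the step most prone to off-by-one errors, and I would handle it by a careful inclusion--exclusion over the faces of $\Delta(P)$, or simply defer to \cite{Stanley-f-vector} since the theorem is being invoked rather than reproved.
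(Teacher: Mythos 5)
The paper does not prove this theorem at all: it is stated as a quotation of Proposition 3.8 of \cite{Stanley-f-vector}, so your plan of sketching the chain-monomial spanning argument, deferring linear independence to Stanley, and then converting $\sum_{j} f_{j-1}\,t^{j}/(1-t)^{j}$ into $\bigl(\sum_{i} h_{i}t^{i}\bigr)/(1-t)^{r}$ via the defining $\mathbf{h}$-vector identity is exactly the cited proof and matches how the paper invokes the result. The only wobble is your description of the regrouping: it should be organized by the \emph{top} element $\sigma$ of each chain's support, using that $[\hat{0},\sigma]$ is a boolean interval of rank $j$ so the chain monomials with top $\sigma$ contribute precisely $t^{j}/(1-t)^{j}$; the remark about the ``minimal face'' is a red herring, though harmless since you defer that step to \cite{Stanley-f-vector}.
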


\begin{Example}\label{ex:basic-simplicial-poset-Hilbert-series}
    Let us verify the previous theorem for our toy simplicial poset $P_1$. Its face ring is computed in Example \ref{ex:basic-simplicial-poset-ideal-computation} and its $\mathbf{f}$-vector and $\mathbf{h}$-vector are showed in Example \ref{ex:basic-simplicial-poset-f-h-computation}.
    
    \noindent
    By dirty hands computation or by using \texttt{Macaulay2} \cite{M2} we see that this Hilbert series simplifies to 
    \[
        \operatorname{Hilb}(\mathbf{k}[P_1], t)=\frac{1-t^2-t^4+t^6}{(1-t^2)^2(1-t^2)}=%\frac{(1+t^2)(1-t)^2(1+t)^2}{(1-t^2)^2(1-t)^2}=
        \frac{1+t^2}{(1-t)^2}
    \]
    and this is indeed the expected result.
%     \begin{scriptsize}
%     \begin{verbatim}
%         i1 : R = ZZ/101[a,b,p,q, Degrees=>{1,1,2,2}]
        
%         o1 = R
        
%         o1 : PolynomialRing
        
%         i2 : I = ideal(p*q, a*b-p-q)
        
%         o2 = ideal (p*q, a*b - p - q)
        
%         o2 : Ideal of R
        
%         i3 : hilbertSeries I
        
%                    2    4    6
%               1 - T  - T  + T
%         o3 = -----------------
%                    2 2       2
%              (1 - T ) (1 - T)
        
%         o3 : Expression of class Divide
%     \end{verbatim}
%     \end{scriptsize}
% this Hilbert series simplifies to 
%     \[
%         \frac{1-t^2-t^4+t^6}{(1-t^2)^2(1-t^2)}=\frac{(1+t^2)(1-t)^2(1+t)^2}{(1-t^2)^2(1-t)^2}=\frac{1+t^2}{(1-t)^2}
%     \]
%     and this is indeed the expected result.
\end{Example}

\begin{Example}\label{ex:basic-simplicial-Fig2-a-Hilbert-series}
    In the case of the face poset $P_2$, it is easy to verify what we have just stated.
    Indeed, in Example \ref{ex:basic-simplicial-Fig2-a}, the face ring and in Example \ref{ex:basic-simplicial-poset-f-h-computation} we computed the face ring, the $\mathbf{f}$-vector and $\mathbf{h}$-vector.
    %
    %\noindent
    It is trivial to observe the following:
    \[
        \operatorname{Hilb}\left(\frac{\mathbf{k}[x, y, z]}{(xz)}, t\right) = \frac{1-t^2}{(1-t)^3}=\frac{1+t}{(1-t)^2}.
    \]
\end{Example}

\subsection{Matroid over a ring $R$}
% %\section{Matroids, matroids over a ring and $\mathbb{Z}$-matroids.}
% %\paragraph{Matroids, Matroids over a ring and arithmetic Matroids.}
% A {\em matroid} $M$ is a simplicial complex $\mathcal{I}$ on a {\em ground set} $[n]=\{1,\dots,n\}$, such that
% \begin{equation*}%\label{eq-ind-set-exchange}
%  	A, B \in \mathcal{I}, \#A>\#B \Rightarrow \exists a\in A\setminus B: B\cup \{a\}\in \mathcal{I}.
% \end{equation*}
% %$$if A, B \in \mathcal{I}$ and $\#A>\#B$, then $\exists a\in A\setminus B: B\cup \{a\}\in \mathcal{I}.$$ 
% %
% The latter is called {\em independent set exchange property}; $\mathcal{I}$ is often called {\em independent sets} family and the facet of $\mathcal{I}$ are called {\em basis} of the matroid.

In \cite{FinkMoci}, Fink and Moci generalize the concept of matroid to matroid over a commutative ring $R$. In this section we give the general definition and, then, we deal with the case $R=\mathbb{Z}$.

\paragraph{$R$-matroids.}
Let $\binom{[n]}{[n]}$ be the set of all subsets of $[n]$ and let $R$--$\operatorname{mod}$ the category of finitely generated $R$-modules. 
\begin{defi}\label{def-matroid-over-a-ring}
A matroid over the ring $R$ is the function 
\[
	\mathcal{M}:\binom{[n]}{[n]}\rightarrow R\mbox{--}\operatorname{mod}
\]
such that for any subset $A$ of $[n]$ and any elements $b$ and $c$ of $[n]\setminus A$ there exist $x_{b,c}$ and $y_{b,c}$ in $\mathcal{M}(A)$, such that:
\begin{eqnarray*}
	\mathcal{M}(A\cup \{b\})&\simeq& \nicefrac{\mathcal{M}(A)}{(x_{b,c})}\\
	\mathcal{M}(A\cup \{c\})&\simeq& \nicefrac{\mathcal{M}(A)}{(y_{b,c})}\\
	\mathcal{M}(A\cup \{b,c\})&\simeq& \nicefrac{\mathcal{M}(A)}{(x_{b,c},y_{b,c})}	
\end{eqnarray*}
\end{defi}
\noindent
The choice of $\mathcal{M}$ is relevant only up to isomorphism. 
Moreover, we are going to assume that $\mathcal{M}$ is \emph{essential}, that is no non-trivial projective module is a direct summand of $\mathcal{M}([n])$. 

% \begin{Example}\label{ex:R-matroid-toric}
% 	Let $A=\mathbb{C}[x,y,x^{-1}, y^{-1}]$ and $[n]=[2]$. Set $\mathcal{M}(\emptyset)=A$, $\mathcal{M}(\{1\})=\nicefrac{A}{(xy-1)}$, $\mathcal{M}(\{2\})=\nicefrac{A}{(xy^{-1}-1)}$ and $\mathcal{M}(\{1,2\})=\nicefrac{A}{(xy-1, xy^{-1}-1)}$.
% \end{Example}

In Proposition 2.6 of \cite{FinkMoci}, Fink and Moci have shown that an essential matroid over a field $\mathbf{k}$ is a matroid in the classical case. 
For this reason, from now on, we are going to call these $\mathbf{k}$-matroids.

\paragraph{Realizable $\mathbb{Z}$-Matroids.}
For what regards this paper, we are going to set $R=\mathbb{Z}$. 
We define a corank function $\operatorname{cork}(A)$ of $\mathcal{M}$ as the corank function of the $\mathbb{Q}$-matroid $\mathcal{M}\otimes_{\mathbb{Z}} \mathbb{Q}$: $$\operatorname{cork}(A)=\operatorname{cork}_{\mathbb{Q}}\mathcal{M}\otimes_{\mathbb{Z}} \mathbb{Q}(A).$$
For any subset $A$ of $[n]$, $\mathcal{M}(A)$ is an abelian group,
\[
	\mathcal{M}(A)=\mathbb{Z}^{d(A)}\times G_A.
\]
where $G_A$ is the torsion part. We call $m(A)=\#G_A$ the \emph{multiplicity} of $A$. Easily, $\operatorname{cork}(A)=d(\emptyset)-d(A)$.

A $\mathbb{Z}$-matroid $\mathcal{M}$ on $[n]$ is realizable if there is a list of elements $z_1, \dots, z_n\in \mathcal{M}(\emptyset)$ such that $\mathcal{M}(A)=\nicefrac{\mathcal{M}(\emptyset)}{(z_i:i\in A)}$.

The definition (see next section) of the poset $\TM$ depends on the realization of the matroid $\mathcal{M}$. 
For this, when we need to use explicitly a realization of a matroid $\mathcal{M}$ we are going to talk about {\em realized matroid} $\mathcal{M}$. 

%When such $z_1, \dots, z_n$ belongs to $\mathbb{Z}^{d(\emptyset)}$ then we say that $\mathcal{M}$ is represented by a list of elements in $\mathbb{Z}^{d(\emptyset)}$.

\begin{Example}\label{ex:R-matroid-toric-Z-1}
	Let $R=\mathbb{Z}$ and $n=2$. Set $\mathcal{M}(\emptyset)=\mathbb{Z}^{2}$, $\mathcal{M}(\{1\})=\nicefrac{\mathbb{Z}^{2}}{(2,0)}$, $\mathcal{M}(\{2\})=\nicefrac{\mathbb{Z}^{2}}{(0,1)}$ and $\mathcal{M}(\{1,2\})=\nicefrac{\mathbb{Z}^{2}}{((2,0),(0,1))}$.
\end{Example}

\begin{Example}\label{ex:R-matroid-toric-Z-2}
	Let $A=\mathbb{Z}$ and $n=2$. Set $\mathcal{M}(\emptyset)=\mathbb{Z}^{2}$, $\mathcal{M}(\{1\})=\nicefrac{\mathbb{Z}^{2}}{(1,1)}$, $\mathcal{M}(\{2\})=\nicefrac{\mathbb{Z}^{2}}{(1,-1)}$ and $\mathcal{M}(\{1,2\})=\nicefrac{\mathbb{Z}^{2}}{((1,1),(1,-1))}$.
\end{Example}

\begin{Remark}
The definition and results in the next sections hold also for \emph{arithmetic matroids}.
We do not need any of the arithmetic matroids tools. 
In Section 6.1 of \cite{FinkMoci} it is shown that $\mathbb{Z}$-matroids with an extra \emph{molecule} condition are arithmetic matroids. 
(Not all arithmetic matroids can be seen as $\mathbb{Z}$-matroid.)
\end{Remark}

% \begin{Remark}
% The definition and results in the next sections hold for $\mathbb{Z}$-matroids and, so, naturally for \emph{arithmetic matroids}.
% %
% We do not need any of the arithmetic matroids tools, but some of the reader may be interested in this and so here we shortly recall a characterization presented by Fink and Moci in Section 6.1 of \cite{FinkMoci}.

% \noindent
% There is a technical definition needed to present the arithmetic matroids:
% Let $S\subseteq S'\subseteq [n]$. We say that the close interval $[S,S']$ is a \emph{molecule} if there exists a disjoint partition of $S'$ as $S\sqcup F \sqcup T$ where for each $A\in [S,S']$, $\operatorname{rk}(A)=\operatorname{rk}(S)+\#S\cap F$.

% \noindent
% Assume $\mathcal{M}$ is a matroid over $\mathbb{Z}$. 
% The triple ($\mathcal{M}$, $\operatorname{cork}$, $m$) is an \emph{arithmetic} matroid if for any molecule $[S, S']$ then
% \begin{equation*}
% 	\rho=(-1)^{\#T}\sum_{A\in [S,S']} (-1)^{\#S-\#A}m(A) \geq 0.
% \end{equation*}
% %Every matroid admits the trivial multiplicity function, everywhere equal to one and aside this case we refer to the couple $(\mathcal{M},m)$ as {\em arithmetic matroid}.
% \end{Remark}

\paragraph{Grothendieck--Tutte polynomial.}
In the rest of this section, we are going to define the Grothendieck--Tutte polynomial for matroid over $\mathbb{Z}$. 
%
%The more general definition (over a generic ring $R$) can be found in Section 7.1 of \cite{FinkMoci}.

Let $\operatorname{L}_0(\operatorname{Ab})$ be the Grothendieck type ring of abelian groups, that is the free group generated by the isomorphic classes $[G]$ for any finitely generated abelian group $G$. There is a ring multiplication given by $[G][G']=[G\times G']$.
%(We also consider product of such classes that $[G]\cdot[G'] as a symbol.)
%
This object is very useful; for instance it appears in \cite{Ekedahl-inv,EkedahlStack, Martino-eke, Martino-eke-intro}.
%check again.
%
In Section 7.1 of \cite{FinkMoci}, it is proved the Grothendieck--Tutte {\em class} is well defined to be the following element of $\operatorname{L}_0(\operatorname{Ab})\otimes \operatorname{L}_0(\operatorname{Ab})$:
\begin{equation*}%\label{eq-GT-poly-general}
	GT_{\mathcal{M}}=\sum_{A\subseteq E} [\mathcal{M}(A)][\mathcal{M}^*(E\setminus A)]%\in \operatorname{K}_0(A-\operatorname{mod})\otimes \operatorname{K}_0(A-\operatorname{mod})
\end{equation*}
where $\mathcal{M}^*$ is the matroid dual to $\mathcal{M}$ and $E$ is their common ground set. 
%
%(We use $E$ to avoid confusion with $[n]$.)
(We use $E$ to avoid confusion between $[n]$ and $[\mathcal{M}(-)]$.)
For a precise definition of the dual matroid $\mathcal{M}^*$ we refer %to  Section 4 of \cite{FinkMoci} or in 
Section 7 of \cite{Dadderio-Moci-Arith}. 

Let $G$ be a group and consider its class $[G]$ in $\operatorname{L}_0(\operatorname{Ab})$: since $G\simeq \mathbb{Z}^d\times G_t$, one has that $[G]=[\mathbb{Z}^d][G_t]\in \operatorname{L}_0(\operatorname{Ab})$. %, where $G_t$ is the torsion part of $G$. 
Now, fix the following evaluations for $\operatorname{L}_0(\operatorname{Ab})$: $v_x([G])=\#G_t (x-1)^{d}$ and, similarly, $v_y([G])=\#G_t (y-1)^{d}$), and consider the image of $GT_\mathcal{M}$ with respect to the map
\[
	v_x\otimes v_y :\operatorname{L}_0(\operatorname{Ab})\otimes_{\mathbb{Z}} \operatorname{L}_0(\operatorname{Ab}) \rightarrow \mathbb{Z}[x,y].
\]	
\noindent
The Grothendieck--Tutte polynomial for $\mathcal{M}$ is $T_{\mathcal{M}}(x,y)=(v_x\otimes v_y) (GT_{\mathcal{M}})$. Then,
\begin{equation}\label{eq-Tutte-classical}
	T_{\mathcal{M}}(x,y)=\sum_{A\subseteq [n]} m(A) (x-1)^{r-d(A)}(y-1)^{\#A - d(A)}.
\end{equation}
This polynomial was firstly introduced by Moci in \cite{Moci-A-tutte} and it is often called the arithmetic Tutte polynomial.
It easy to observe that 
\begin{equation}\label{eq-Tutte-classical-dual}
	T_{\mathcal{M}}(x,y)=T_{\mathcal{M}^*}(y,x).
\end{equation}

% Now, fix the following two evaluations for $\operatorname{K}_0(A-\operatorname{mod})$: $v_1([G])=\#G_t (x-1)^{d}$ and $v_2([G])=(y-1)^{d}$. Consider the image of $GT_\mathcal{M}$ with respect to the map
% \[
% 	v_1\otimes v_2 :\operatorname{K}_0(A-\operatorname{mod})\otimes \operatorname{K}_0(A-\operatorname{mod}) \rightarrow \mathbb{Z}[x,y].
% \]	
% \noindent
% Call $T_{\mathcal{M}}(x,y)=v_1\otimes v_2 (GT_{\mathcal{M}})$. Then,
% \begin{equation}\label{eq-Tutte-classical}
% T_{\mathcal{M}}(x,y)=\sum_{S\subseteq [n]} m(S) (x-1)^{r-\operatorname{rk}(S)}(y-1)^{\#A - \operatorname{rk}(S)}.
% \end{equation}

\noindent
%This is actually a specialization of the TG polynomial (see Section 7.1 of \cite{FinkMoci} for further details) but for what concerns this paper this is what we need.

% \begin{Example}\label{ex:matroids-tutte}
%     Dr. Lenz, please, compute the Tutte poly of the previous examples (ALL!)
% \end{Example}

\begin{Example}\label{ex:tutte-R-matroid-toric-Z-1}
	Let us compute $T_{\mathcal{M}}$ for the $\mathbb{Z}$-matroid given in Example \ref{ex:R-matroid-toric-Z-1}.
    In $(\ref{eq-Tutte-classical})$ the contribute  of the empty set is $(x-1)^2$; the contribution of the singleton $\{1\}$ is $2(x-1)$; the contribute of the singleton $\{2\}$ is $(x-1)$; finally, the contribute of the full ground set $[2]$ is $2$.
    Thus, $T_{\mathcal{M}}(x,y)=x^2+x$.
\end{Example}

\begin{Example}\label{ex:tutte-R-matroid-toric-Z-2}
	We compute $T_{\mathcal{M}}$ for the matroid in Example \ref{ex:R-matroid-toric-Z-2}.
    In $(\ref{eq-Tutte-classical})$ the contribution of the empty set is $(x-1)^2$; the contribute of each singleton is $(x-1)$; finally, the contribute of the full ground set $[2]$ is $2$.
    Thus, $T_{\mathcal{M}}(x,y)=x^2+1$.
\end{Example}

% \begin{Example}\label{ex:tutte-R-matroid-toric-Z-3}
% 	Let us compute $T_{\mathcal{M}}$ for the $\mathbb{Z}$-matroid given in Example \ref{ex:R-matroid-toric-Z-3}.
%     %
%     As before let us list the contribute in $(\ref{eq-Tutte-classical})$ of each subset:
% 	\begin{center}
% 	\begin{tabular}{cl}
%     $\emptyset$ 			& $(x-1)^2$\\
%     $\{1\},\{2\},\{3\}$ 	& $(x-1)$\\
%     $\{1,2\}$		 		& $2$\\
%     $\{1,3\}$		 		& $1$\\
%     $\{2,3\}$		 		& $1$\\
%     $\{1,2,3\}$		 		& $(y-1)$ 
%     \end{tabular}
%    	\end{center}
    
%     Thus, we get $T_{\mathcal{M}}(x,y)=x^2+x+y+1$.
% \end{Example}

As remarked in the introduction, the face ring and the Grothendieck--Tutte polynomial of a $\mathbf{k}$-matroid are related.

\begin{Theorem}\label{Thm-bjorner-hilbert-tutte}
    Let $\mathcal{M}$ be a $\mathbf{k}$-matroid of rank $r$.
    Let $\mathbf{k}[\mathcal{M}]$ be its face ring. 
    Then,
    \[
        \operatorname{Hilb}(\mathbf{k}[\mathcal{M}],t)=\frac{t^{r}}{(1-t)^{r}} T_{\mathcal{M}^*}(1, \nicefrac{1}{t}).
    \]
\end{Theorem}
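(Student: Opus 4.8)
The plan is to reduce the statement to Theorem~\ref{Thm-Stanley} (Stanley's Hilbert series formula for the face ring of a simplicial poset) together with the classical identity between the $\mathbf{h}$-vector of a matroid complex and the Tutte polynomial of its dual. Concretely, for a $\mathbf{k}$-matroid $\mathcal{M}$ of rank $r$, its independent set complex $\mathcal{I}=\mathcal{I}(\mathcal{M})$ is an $(r-1)$-dimensional simplicial complex (hence a simplicial poset whose segments $[\hat 0,\sigma]$ are boolean), and $\mathbf{k}[\mathcal{M}]$ is by definition the Stanley--Reisner ring $\mathbf{k}[\mathcal{I}]$. So by Theorem~\ref{Thm-Stanley},
\[
    \operatorname{Hilb}(\mathbf{k}[\mathcal{M}],t)=\frac{h_0+h_1t+\cdots+h_rt^r}{(1-t)^r},
\]
where $(h_0,\dots,h_r)=\mathbf{h}(\mathcal{I})$ is determined by $\sum_{i=0}^r f_{i-1}(t-1)^{r-i}=\sum_{i=0}^r h_it^{r-i}$, with $f_{i-1}$ the number of independent sets of size $i$.

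The heart of the argument is then to identify $\sum_{i=0}^r h_i t^{r-i}$ with $t^r\,T_{\mathcal{M}^*}(1,1/t)$. I would do this by unwinding the corank--nullity expansion: from $(\ref{eq-Tutte-classical})$ applied to $\mathcal{M}^*$ and the duality $(\ref{eq-Tutte-classical-dual})$, or more directly by recalling the well-known fact that for a matroid $M$ of rank $r$ on $n$ elements the Tutte polynomial specializes as $T_M(1,y)=\sum_{i} f_{i-1}^{*}\,(y-1)^{\,i - r^{*}}$ wait — more cleanly, I would use the standard identity
\[
    \sum_{F\in\mathcal{I}(\mathcal{M})}(y-1)^{r-\#F}=T_{\mathcal{M}^*}(1,y),
\]
which expresses the generating function of independent sets of $\mathcal{M}$ (graded by corank $r-\#F$) in terms of the dual Tutte polynomial; this is exactly the classical ``independence complex vs.\ dual Tutte'' statement underlying the first unnumbered Theorem of the introduction. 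Setting $y=1/t$, multiplying by $t^r$, and matching with the definition of the $\mathbf{h}$-polynomial $\sum_i h_it^{r-i}=\sum_i f_{i-1}(1/t-1+1)^{\dots}$ — more precisely, substituting $t\mapsto 1/t$ into $\sum_i f_{i-1}(t-1)^{r-i}=\sum_i h_i t^{r-i}$ and clearing denominators — yields $\sum_i h_i t^{r-i}=t^r\sum_i f_{i-1}(1/t-1)^{r-i}=t^r\,T_{\mathcal{M}^*}(1,1/t)$. Combining with the display above gives the claim.

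I expect the main obstacle to be purely bookkeeping: keeping the two duals straight (corank in $\mathcal{M}$ versus rank in $\mathcal{M}^*$, and which variable of $T$ is specialized to $1$) and checking that the substitution $t\mapsto 1/t$ in the defining relation for the $\mathbf{h}$-vector produces precisely the corank-generating function of the independent sets, with no off-by-one error in the exponent $r-i$ versus $r-\#F$ (recall $f_{i-1}$ counts sets of size $i$, so $\#F=i$ and $r-\#F=r-i$). Once the exponents are aligned, everything is a one-line manipulation; there is no genuinely hard step, since Theorem~\ref{Thm-Stanley} does the algebraic work and the combinatorial identity $\sum_{F\in\mathcal{I}}(y-1)^{r-\#F}=T_{\mathcal{M}^*}(1,y)$ is classical (and in any case follows from $(\ref{eq-Tutte-classical})$–$(\ref{eq-Tutte-classical-dual})$ by grouping the corank--nullity sum for $\mathcal{M}^*$ according to which subsets are independent in $\mathcal{M}$). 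This is why the paper's later generalization (Theorem~\ref{thm-main-result}) only needs a simplicial poset with the correct $\mathbf{f}$-vector, as the remark indicates.
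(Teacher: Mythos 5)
Your argument is correct. The key identity $\sum_{F\in\mathcal{I}(\mathcal{M})}(y-1)^{r-\#F}=T_{\mathcal{M}^*}(1,y)$ does hold: setting $x=1$ in the corank--nullity expansion of $T_{\mathcal{M}^*}$ kills every subset that is not spanning in $\mathcal{M}^*$, the complements of such spanning sets are precisely the independent sets $F$ of $\mathcal{M}$, and the exponent $\#A-(n-r)$ becomes $r-\#F$; combining this with the defining relation for the $\mathbf{h}$-vector (substitute $y=1/t$, multiply by $t^r$) and Theorem~\ref{Thm-Stanley} yields the claim with no gaps. Where you differ from the paper is that the paper does not prove this theorem at all: its ``proof'' is a pointer to the literature, namely Bj\"orner's Appendix (Section A.3) in De Concini--Procesi \cite{DC-P-Box}, where the statement first appears in this form. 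What you wrote is, in substance, the classical-case instance of the computation the paper itself carries out later for Theorem~\ref{thm-main-result}: identify $f_{i-1}$ with the number of independent sets of size $i$, observe that $\sum_i h_i t^{r-i}=\sum_{F\in\mathcal{I}}(t-1)^{r-\#F}=T_{\mathcal{M}}(t,1)$, and then invoke Theorem~\ref{Thm-Stanley} together with the duality (\ref{eq-Tutte-classical-dual}), which is equivalent to your substitution $y=1/t$. The citation buys brevity and correct attribution; your self-contained derivation buys transparency, and it makes explicit the point the paper only states in passing --- that the argument uses nothing beyond a simplicial poset (here a simplicial complex) with the correct $\mathbf{f}$-vector --- which is exactly why it extends to realizable $\mathbb{Z}$-matroids in Theorem~\ref{thm-main-result}.
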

\begin{proof}
Under the author knowledge, this result appears first in the above form in the Appendix (section A.3) by Bj\"orner in the work of De Concini and Procesi \cite{DC-P-Box}.
% %
% The proof follows in few steps:
% \begin{itemize}
% \item 
% \end{itemize}
\end{proof}

%\noindent
%\begin{Remark}
%	The naive observation that $T_{\mathcal{M}^*}(1, \nicefrac{1}{t})=T_{\mathcal{M}}(\nicefrac{1}{t},1)$ might mislead the reader. 
	%
%	The information about $\mathcal{M}$ should be read in $\mathbf{k}[\mathcal{M}^*]$ and this is why we stress it also in the previous theorem.
%\end{Remark}

% \begin{Example}
%     Compare the Hilbert series computed in Example \ref{ex:basic-simplicial-poset-Hilbert-series} and the Tutte polynomials computation in Example \ref{ex:matroids-tutte}.
% \end{Example}

% A matroid is said to be {\em realizable} over a field $\mathbf{k}$ if there exists $v_1, v_2, \dots, v_{\#E}$ in a $\mathbf{k}$-vector space $V$ such that $A=\{a_1, a_2\dots, a_k\}\in \mathcal{I}$ if and only if $v_{a_1}, v_{a_2}, \dots, v_{a_k}$ are linearly independent in $V$.

% \noindent
% The notion of realizability is slitly different in the arithmetic case. Indeed, $M$ is a {\em realizable arithmetic} matroid with $v_1, v_2, \dots, v_{\#E}\in \mathbb{Z}^d$ if for any $A \subseteq E$, $m(A)=\abs{\det(A)}$ providing that the vectors $v_{a_1}, v_{a_2}, \dots, v_{a_k}$ are a independent in $\mathbb{Z}^d$.
%
% \begin{Example}
%     Dr. Lenz, please, show here that the realizable matroids - above - where realizable. make argument about the non-realizability of the previous examples above.
% \end{Example}

The goal of the paper is to extend this result to realizable $\mathbb{Z}$-matroids.

\section{The poset of torsions}\label{sec-coset-of-poset}
The aim of this section is to define a new poset taking the role of the independent complex in the case of $\mathbf{k}$-matroids.

\textbf{All along this section we assume} that $A$ is a subset of $[n]$ and $b$ is in $[n]\setminus A$.
Let $b=c$ in Definition \ref{def-matroid-over-a-ring}: it is required the existence of a quotient homomorphism by $x_{b,b}\in \mathcal{M}(A)$: \begin{equation}\label{eq:canonicalquotient}
	\pi_{(A, b)}: \mathcal{M}(A) \rightarrow \mathcal{M}(A\cup \{b\}).
\end{equation}
Call $\pi_{(A, b)}$ the \emph{canonical projection} associated to $A$ and $b$.
While the homomorphism $\pi_{(A, b)}$ is unique, the choice of $x_{b,b}$ is not. 
In the case of realizable $\mathbb{Z}$-matroids, $x_{b,b}$ is unique and we denote it by $x_{b}$.

%Assume for simplicity that $\mathcal{M}(\emptyset)=\mathbb{Z}^d$, but the following argument works also if we drop this condition.
%
For any subset $A$, $\mathcal{M}(A)\simeq \mathbb{Z}^{d(A)} \times G_A$, where $d(A)$ is the rank of $\mathcal{M}(A)$ and $G_A$ is the torsion part of $\mathcal{M}(A)$.
Call $C_A=\operatorname{Hom}(G_A, \mathbb{C}^*)$, the dual group of $G_A$.

\begin{defi}\label{def-set-poset-of-torsion}
	We call $\TM$ the set of torsions of $\mathcal{M}$. 
	This is the set of all pairs $(A, l)$ with $d(\emptyset)-d(A)=\#A$ and $l\in C_A$.	
\end{defi}

\noindent
Inspired by Section 5 of \cite{Moci-A-tutte}, we are going to see such set as a bunch of tori with the right dimension and cardinality, prescribed by the $\mathbb{Z}$-matroid.
This is the reason that leads us to work with the dual group $C_A$ instead of $G_A$, even if they are isomorphic.
Moreover, consider $A$ and $A\cup \{b\}$ such that $d(\emptyset)-d(A)=\#A$ and $d(\emptyset)-d(A\cup \{b\})=\#A+1$. Then, the map $\pi_{(A, b)}$ restricted to $G_A$ is injective and its dual $\pi^{(A, b)}: C_{A\cup \{b\}}\twoheadrightarrow C_{A}$ is surjective.

%Call $\underline{l}$ the copy of $\mathbb{Z}^{d(A)}$ associated to $l\in C_A$, that is:
%\[
%\underline{l}=\{(m_1,\dots, m_{d(A)}, l): (m_1,\dots, m_{d(A)})\in \mathbb{Z}^{d(A)}\}\subseteq \mathcal{M}(A).%\subseteq \mathbb{Z}^{\operatorname{rk}(\mathcal{M}_{free}(A))}.
%\]
%
%\noindent
%Let $h$ be an element in the dual of the torsion group in $\mathcal{M}(A\cup \{b\})$.
%%
%Call $\overline{h}$ the coset of $h$ in $\mathcal{M}(A)$ as a quotient by $x_b$, i.e. $\mathcal{M}(A\cup \{b\})\simeq\nicefrac{\mathcal{M}(A)}{\left\langle x_b \right\rangle}$. 
%%
%\[
%\overline{h}=\{h+\left\langle x_b \right\rangle \}\subseteq \mathcal{M}(A).%\subseteq \mathbb{Z}^{\operatorname{rk}(\mathcal{M}_{free}(A))}.
%\]

\begin{defi}\label{def-covering-arithmetic}
	Let $(A\cup \{b\}, h)$ and $(A, l)$ be two elements of $\TM$.
	We say that $(A\cup \{b\}, h)$ covers $(A, l)$, and we write $(A\cup \{b\}, h)\rhd(A, l)$, if and only if $\pi^{(A, b)}(h)=l$.
\end{defi}

\begin{Example}\label{ex:R-matroid-toric-Z-1-poset-coset}
	Let us compute the poset of torsions of the matroid given in Example \ref{ex:R-matroid-toric-Z-1}. We show the poset in Figure \ref{fig:poset-coset-example}.a).
    Clearly there are six elements $(\emptyset, e)$, $(\{1\}, e)$, $(\{1\}, \zeta)$, $(\{2\}, e)$, $([2], e)$, and $([2], \zeta)$.
    
    \noindent
    Now observe that trivially $(\{1\}, e)$, $(\{1\}, \zeta)$, $(\{2\}, e)$ cover $(\emptyset, e)$ and $([2], x)$ covers $(\{2\}, e)$ because $C_{[2]}$ surjects to $C_{\{2\}}=\{e\}$.
    Moreover $([2], x)$ covers $(\{1\}, x)$, because $C_{[2]}\simeq C_{\{1\}}$, thus $\pi^{(\{1\}, 2)}(x)=x$. This also shows that $([2], x)$ does not cover $(\{1\}, y)$ if $x\neq y\in\nicefrac{\mathbb{Z}}{2\mathbb{Z}}$. 
\end{Example}

\begin{Example}\label{ex:R-matroid-toric-Z-2-poset-coset}
	The poset of torsions of the matroid given in Example \ref{ex:R-matroid-toric-Z-2} is actually the poset $P_1$ defined in Example \ref{ex:basic-simplicial-poset} and discussed all Section \ref{sec-basic-notion}.
    This poset is in Figure \ref{fig:poset-coset-example}.b).
    We leave to the reader to verify the covering relations, which are pretty straightforward.
\end{Example}

\begin{figure}[htb]
\begin{center}
\begin{tabular}{ccc}%{m{3cm}m{3cm}m{3cm}}
		\begin{tikzpicture}[scale=0.5, align=center]
            \node (1) at (-1.5,0) {$(12,e)$};
            \node (2) at (+1.5,0) {$(12,\zeta)$};
            
 			\node (a) at (-3, -2) {$(1,e)$};
            \node (b) at (0, -2) {$(1,\zeta)$};
 			\node (c) at (3, -2) {$(2,e)$};
 
            \node (0) at (0, -4) {$(\emptyset,e)$};

			\draw (1) -- (a);
            \draw (1) -- (c);
            \draw (2) -- (b);
            \draw (2) -- (c);

            \draw (a) -- (0);
            \draw (b) -- (0);
            \draw (c) -- (0);
        \end{tikzpicture}
&        \begin{tikzpicture}[scale=0.5, align=center]
            \node (1) at (0,0) {$(12,e)$};
            \node (2) at (3,0) {$(12,\zeta)$};
            \node (a) at (0, -2) {$(1,e)$};
            \node (b) at (3, -2) {$(2,e)$};
            \node (0) at (1.5, -4) {$\hat{0}$};
            \draw (1) -- (a);
            \draw (2) -- (a);
            \draw (1) -- (b);
            \draw (2) -- (b);
            \draw (a) -- (0);
            \draw (b) -- (0);
        \end{tikzpicture}    
&		\begin{tikzpicture}[scale=0.5, align=center]
            \node (1) at (-1.5,0) {$(12,e)$};
            \node (2) at (+1.5,0) {$(13,e)$};
            \node (3) at (+4.5,0) {$(23,e)$};
            \node (4) at (-4.5,0) {$(12,\zeta)$};

 			\node (a) at (-3, -2) {$(1,e)$};
            \node (b) at (0, -2) {$(2,e)$};
 			\node (c) at (3, -2) {$(3,e)$};
 
            \node (0) at (0, -4) {$\hat{0}$};

			\draw (1) -- (a);
            \draw (1) -- (b);
            \draw (2) -- (a);
 	        \draw (2) -- (c);
            \draw (3) -- (b);
 	        \draw (3) -- (c);
			\draw (4) -- (a);
            \draw (4) -- (b);
  
            \draw (a) -- (0);
            \draw (b) -- (0);
            \draw (c) -- (0);
        \end{tikzpicture}\\
  a)  & b) &   c)
\end{tabular}
\end{center}
\caption{a) The poset of torsions of the matroid given in Example \ref{ex:R-matroid-toric-Z-1}; b) The poset of torsions $P_1$ of the matroid given in Example \ref{ex:R-matroid-toric-Z-2} c) The poset of torsions of the matroid in Example \ref{ex:R-matroid-toric-Z-3}.}
   \label{fig:poset-coset-example}
\end{figure}
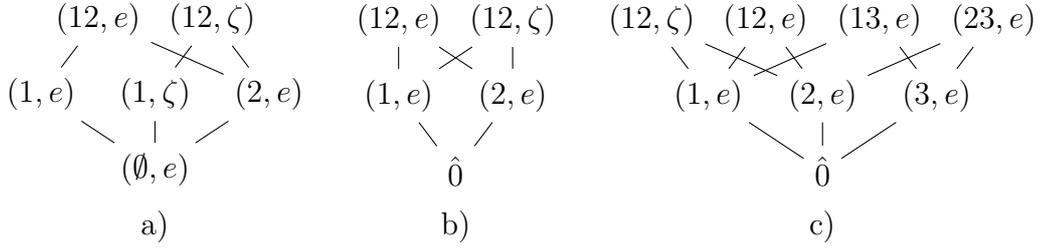

\section{$\TM$ is a union of simplicial posets}\label{sec-simplicial-poset}
In this section we are going to prove Theorem \ref{thm-simplicial-poset-intro}, that is $\TM$ is a a union of simplicial posets.
We start by proving two properties of $\TM$.

\begin{Proposition}\label{prop-the-covering-is-nice}
    Let $\mathcal{M}$ be a realizable matroid over $\mathbb{Z}$. 
    Let $(A\cup \{b\}, h)$, $(A, l_1)$, $(A, l_2)$ be in $\TM$. 
    If $(A\cup \{b\}, h)\rhd(A, l_1)$ and $(A\cup \{b\}, h)\rhd(A, l_2)$ then $l_1=l_2\in \mathcal{M}(A)$.
\end{Proposition}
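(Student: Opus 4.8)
The plan is to unwind Definition \ref{def-covering-arithmetic} and observe that the covering relation, read downward from a fixed element, is controlled by a genuine single-valued function rather than a relation. First I would record that, since $(A\cup\{b\},h)$ and $(A,l_i)$ all belong to $\TM$, the defining constraints give $d(\emptyset)-d(A\cup\{b\})=\#A+1$ and $d(\emptyset)-d(A)=\#A$; subtracting, $d(A)-d(A\cup\{b\})=1$. This is precisely the situation recalled just before Definition \ref{def-covering-arithmetic}: the canonical projection $\pi_{(A,b)}$ of $(\ref{eq:canonicalquotient})$ restricts to an injection $G_A\hookrightarrow G_{A\cup\{b\}}$ on torsion parts, so that its $\operatorname{Hom}(-,\mathbb{C}^*)$-dual is a well-defined surjective homomorphism $\pi^{(A,b)}\colon C_{A\cup\{b\}}\twoheadrightarrow C_A$.

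Then I would apply the definition of covering verbatim. From $(A\cup\{b\},h)\rhd(A,l_1)$ we get $l_1=\pi^{(A,b)}(h)$, and from $(A\cup\{b\},h)\rhd(A,l_2)$ we get $l_2=\pi^{(A,b)}(h)$. Since $\pi^{(A,b)}$ is a map, $\pi^{(A,b)}(h)$ is a single element of $C_A$, whence $l_1=\pi^{(A,b)}(h)=l_2$. Via the fixed isomorphism $G_A\simeq C_A$ one may equally read this common value inside $\mathcal{M}(A)$, which is the form appearing in the statement.

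I do not expect a genuine obstacle here: the content of the proposition is exactly that the downward cover data is deterministic, and this drops out once one knows that $\pi^{(A,b)}$ is a function. The only point that needs care — and the reason the realizability hypothesis is present — is that for a realized $\mathbb{Z}$-matroid the element $x_b$, hence the canonical projection $\pi_{(A,b)}$, is uniquely determined; without this one could not speak of \emph{the} dual map $\pi^{(A,b)}$, and the asserted uniqueness of $l$ would not even be well posed.
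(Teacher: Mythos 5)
Your argument is correct and is essentially identical to the paper's proof: both simply unwind Definition \ref{def-covering-arithmetic} to get $l_1=\pi^{(A,b)}(h)=l_2$, using that $\pi^{(A,b)}$ is a single-valued map into $C_A$. Your additional remarks on why $\pi^{(A,b)}$ is well defined (and on the role of realizability in fixing the canonical projection) are consistent with the discussion preceding the definition in the paper and do not change the argument.
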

\begin{proof}
	By Definition \ref{def-covering-arithmetic} if $(A\cup \{b\}, h)\rhd(A, l_1)$ and $(A\cup \{b\}, h)\rhd(A, l_2)$ then 
	$\pi^{(A, b)}(h)=l_1$ and $\pi^{(A, b)}(h)=l_2$, thus $l_1=l_2\in C_A$.
\end{proof}

\begin{Proposition}\label{prop-every-subset-is-there}
    Let $\mathcal{M}$ be a realizable matroid over $\mathbb{Z}$. 
    %
    %Let $A$ be an independent subset of $[n]$.
    %
    Let $(A\cup \{b\}, h)$ be in $\TM$. 
    %
    %If $(A\cup \{b\}, h)$ covers $(A, l_1)$ and if $(A\cup \{b\}, h)$ covers $(A, l_2)$, then $l_1=l_2$.
    Then, there exists $l\in G_A$ such that $(A\cup \{b\}, h)\rhd(A, l)$.
\end{Proposition}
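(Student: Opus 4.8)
The plan is to reduce the statement to the single nontrivial point that $(A,\pi^{(A,b)}(h))$ is again an element of $\TM$, i.e.\ that $d(\emptyset)-d(A)=\#A$; once this is in place, $l:=\pi^{(A,b)}(h)\in C_A$ works essentially by Definition \ref{def-covering-arithmetic}. (Here I read the conclusion, stated with $l\in G_A$, as the equivalent statement with $l\in C_A$, since $G_A\simeq C_A$ and it is $C_A$ that enters Definition \ref{def-covering-arithmetic}.)

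First I would record two elementary consequences of realizability. Since $\mathcal{M}(A)=\mathcal{M}(\emptyset)/(z_i:i\in A)$ is a quotient of $\mathcal{M}(\emptyset)$ by $\#A$ generators, tensoring with $\mathbb{Q}$ gives $d(A)\ge d(\emptyset)-\#A$, that is $d(\emptyset)-d(A)\le \#A$ for every $A$. Likewise $\mathcal{M}(A\cup\{b\})$ is the quotient of $\mathcal{M}(A)$ by one generator, so $d(A\cup\{b\})\ge d(A)-1$.

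Next I would feed in the hypothesis $(A\cup\{b\},h)\in\TM$, which by Definition \ref{def-set-poset-of-torsion} means $d(\emptyset)-d(A\cup\{b\})=\#A+1$. Combining with the two inequalities above,
\[
\#A+1 \;=\; d(\emptyset)-d(A\cup\{b\}) \;\le\; d(\emptyset)-d(A)+1 \;\le\; \#A+1,
\]
so all inequalities are equalities; in particular $d(\emptyset)-d(A)=\#A$, hence $(A,l)\in\TM$ for every $l\in C_A$. Moreover the pair $A\subset A\cup\{b\}$ now satisfies the hypotheses of the paragraph preceding Definition \ref{def-covering-arithmetic}, so $\pi_{(A,b)}$ restricts to an injection $G_A\hookrightarrow G_{A\cup\{b\}}$ and its dual $\pi^{(A,b)}:C_{A\cup\{b\}}\twoheadrightarrow C_A$ is defined. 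Setting $l:=\pi^{(A,b)}(h)\in C_A$, Definition \ref{def-covering-arithmetic} immediately yields $(A\cup\{b\},h)\rhd(A,l)$, which is the claim.

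The only real content is the middle step — that the rank-drop condition cutting out $\TM$ is inherited from $A\cup\{b\}$ down to $A$; everything else is unwinding the definitions, so I do not expect a genuine obstacle. If one wanted the same conclusion for non-realizable $\mathbb{Z}$-matroids one would simply replace the explicit realization by the isomorphisms of Definition \ref{def-matroid-over-a-ring} (taken with $b=c$), which yield the same rank inequalities.
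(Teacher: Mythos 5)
Your proof is correct, and its overall shape matches the paper's: both arguments reduce the statement to checking that $A$ itself satisfies the rank-drop condition $d(\emptyset)-d(A)=\#A$, after which $l=\pi^{(A,b)}(h)$ gives the covering relation directly from Definition \ref{def-covering-arithmetic} (and, like the paper's own proof, you ultimately produce $l\in C_A$ rather than $G_A$, which is how the statement is meant). The one place you diverge is in how that middle step is justified. The paper passes to the $\mathbb{Q}$-matroid $\mathcal{M}\otimes\mathbb{Q}$ and invokes the fact that its independent sets form a simplicial complex, so independence of $A\cup\{b\}$ is inherited by $A$; you instead sandwich $d(\emptyset)-d(A)$ using the two elementary inequalities $d(A)\ge d(\emptyset)-\#A$ and $d(A\cup\{b\})\ge d(A)-1$ obtained from the realization, forcing equality from the hypothesis $d(\emptyset)-d(A\cup\{b\})=\#A+1$. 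Your route is slightly more self-contained, since it does not rely on the classical matroid structure of $\mathcal{M}\otimes\mathbb{Q}$, and, as you observe, the same rank inequalities already follow from the one-element quotients in Definition \ref{def-matroid-over-a-ring}, so your argument would carry over verbatim to non-realizable $\mathbb{Z}$-matroids; the paper's version buys brevity by citing downward-closedness of the independence complex of the rational matroid.
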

\begin{proof}
	Consider the $\mathbb{Q}$-matroid $\mathcal{M}\otimes \mathbb{Q}$ and observe that $\mathcal{M}\otimes \mathbb{Q}(A)=\mathbb{Q}^{d(A)}$.
    Thus, $(A\cup \{b\}, h)\in\TM$ implies that $A\cup \{b\}$ belongs to the independent set complex of $\mathcal{M}\otimes \mathbb{Q}$. 
    This is a simplicial complex and so if $A\subset A\cup \{b\}$ then $A$ also belongs to the independent set complex and so, by definition, one has that $d(\emptyset)-d(A)=\#A$.
	
	\noindent
	Hence, remark that the map $\pi^{(A, b)}: C_{A\cup \{b\}} \rightarrow C_A$ is well defined and pick $l=\pi^{(A, b)}(h)$.
	Such $l\in C_A$ fulfills the statement request.
\end{proof}

\begin{Theorem}\label{thm-simplicial-poset}
    For every representable matroid $\mathcal{M}$ over $\mathbb{Z}$ with $\mathcal{M}(\emptyset)=\mathbb{Z}^{d}$, $\TM$ is a simplicial poset.
\end{Theorem}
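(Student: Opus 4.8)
The plan is to check the two defining conditions of a simplicial poset: a unique minimal element $\hat{0}$, and a Boolean interval $[\hat{0},\sigma]$ below each $\sigma$. Since $\mathcal{M}(\emptyset)=\mathbb{Z}^{d}$ we have $G_\emptyset=0$, hence $C_\emptyset=\{e\}$, so $(\emptyset,e)$ is the unique element of $\TM$ with first coordinate $\emptyset$. By (the proof of) Proposition \ref{prop-every-subset-is-there}, any $(A,l)\in\TM$ with $A\neq\emptyset$ covers some $(A\setminus\{b\},l')$, and iterating yields a saturated chain from $(A,l)$ down to $(\emptyset,e)$; thus $(\emptyset,e)$ is the unique minimal element. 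I would also record the elementary facts used below: in a covering relation $(A\cup\{b\},h)\rhd(A,l)$ the first coordinate strictly grows by one element, so $(B,m)<(A,l)$ forces $B\subsetneq A$ (in particular $<$ is a genuine partial order), and every saturated chain from $\hat{0}$ to $(A,l)$ has length $\#A$.

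The core of the proof is a coherent family of restriction maps on the character groups. For realizable $\mathcal{M}$ one has $\mathcal{M}(X)=\mathcal{M}(\emptyset)/(z_i:i\in X)$, so for $B\subseteq A$ there is a canonical surjection $\mathcal{M}(B)\twoheadrightarrow\mathcal{M}(A)$, and these compose associatively: for $C\subseteq B\subseteq A$ the map $\mathcal{M}(C)\to\mathcal{M}(A)$ is the composite through $\mathcal{M}(B)$. If $A$ is independent, i.e.\ $d(\emptyset)-d(A)=\#A$, then so is every $B\subseteq A$ (once more by the simplicial-complex argument inside Proposition \ref{prop-every-subset-is-there}), and each single step $\pi_{(X,b)}$ restricts to an injection $G_X\hookrightarrow G_{X\cup\{b\}}$ as recorded in Section \ref{sec-coset-of-poset}; composing, $\mathcal{M}(B)\twoheadrightarrow\mathcal{M}(A)$ restricts to an injection $G_B\hookrightarrow G_A$. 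Dualizing, I obtain surjections $\rho^A_B\colon C_A\twoheadrightarrow C_B$ with $\rho^A_A=\operatorname{id}$, $\rho^B_C\circ\rho^A_B=\rho^A_C$, and $\rho^{A}_{A\setminus\{b\}}=\pi^{(A\setminus\{b\},b)}$; the point is that $\rho^A_B$ is independent of any chain chosen between $B$ and $A$.

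Granting this, the rest is formal. I would first establish
\[
(B,m)\le(A,l)\ \text{in}\ \TM\quad\Longleftrightarrow\quad B\subseteq A\ \text{and}\ m=\rho^A_B(l).
\]
For ``$\Leftarrow$'': order $A\setminus B$ arbitrarily, form the chain of intermediate independent sets, and apply Definition \ref{def-covering-arithmetic} one step at a time using $\rho^B_C\circ\rho^A_B=\rho^A_C$. For ``$\Rightarrow$'': induct along a saturated chain of coverings, using the same identities. This shows that $(B,m)\mapsto B$ is a bijection from $[\hat{0},(A,l)]$ onto the Boolean lattice $2^{A}$ of subsets of $A$: every $B\subseteq A$ is independent, so $(B,\rho^A_B(l))\in\TM$ is its unique preimage. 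It is an order isomorphism because, inside the interval, $(B,m)\le(B',m')$ is equivalent to $B\subseteq B'$ — one direction by transitivity, the other because $B\subseteq B'$ forces $m=\rho^A_B(l)=\rho^{B'}_B(\rho^A_{B'}(l))=\rho^{B'}_B(m')$. Hence $[\hat{0},\sigma]$ is Boolean for every $\sigma$, and $\TM$ is a simplicial poset.

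The step I expect to be the genuine obstacle is the coherence claim of the second paragraph — that the maps $\rho^A_B$ are well defined independently of the intermediate sets and single-step quotients chosen. This is precisely where representability is used, and is why the theorem is stated in that generality: for a non-realizable $\mathbb{Z}$-matroid the single-step maps, hence their composites, need not be canonical. Once this coherence is in place, connectivity below $\hat{0}$, antisymmetry, and the interval computation are all routine.
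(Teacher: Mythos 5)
Your proof is correct and follows the same global strategy as the paper: identify $(\emptyset,e)$ as the unique bottom element and show that each interval $[\hat{0},\sigma]$ is Boolean by matching its elements, bijectively and order-preservingly, with the subsets of the independent set underlying $\sigma$. The difference lies in how uniqueness of the element over each subset is justified. The paper invokes Proposition \ref{prop-every-subset-is-there} for existence and Proposition \ref{prop-the-covering-is-nice} for uniqueness; but Proposition \ref{prop-the-covering-is-nice} only forbids two distinct elements over the same subset that are covered by one and the same element, so by itself it does not exclude reaching two different characters over a set $E$ by descending from $\sigma$ through two different intermediate sets $E\cup\{c\}$ and $E\cup\{c'\}$. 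What closes this is exactly your coherence lemma: for a realized matroid the canonical quotients $\mathcal{M}(B)=\nicefrac{\mathcal{M}(\emptyset)}{(z_i:i\in B)}\twoheadrightarrow\mathcal{M}(A)$ commute, hence so do the induced surjections $\rho^A_B$ on character groups, and the character attached to $B\subseteq A$ inside $[\hat{0},(A,l)]$ is the well-defined $\rho^A_B(l)$, independent of the chosen chain. So your write-up makes explicit the diamond-commutativity that the paper's proof leaves implicit, and correctly locates it as the precise point where the realization is used (consistently with Remark \ref{remark-conjecture}); your preliminary verifications (uniqueness of the minimal element via $C_\emptyset=\{e\}$, heredity of independence through the $\mathbb{Q}$-matroid, and the length of saturated chains) are also fine.
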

\begin{proof}
	The element $(\emptyset, e)$ is the bottom element. 
    The only thing to check is that the interval $I=[(\emptyset, e), (A\cup\{b\}, h)]$ is boolean for every independent set $A\cup\{b\}$ and every $h\in C_{A\cup\{b\}}$.
    Using recursively Proposition \ref{prop-every-subset-is-there}, for every subset $E$ of $A\cup\{b\}$ there exists $l_e\in C_E$ such that $(E,l_e)$ belongs to $I$.
    Moreover, because of Proposition \ref{prop-the-covering-is-nice}, such subset $E$ appears only once in the interval $I$.
    Thus, $I$ is isomorphic as a poset to the boolean lattice $[\emptyset, A\cup\{b\}]$.     
\end{proof}

Many of the facts shown in Section 5 and 6 of \cite{Moci-A-tutte} can be proved as an application of the last theorem.
It is worth to mention, for instance that as a corollary of Theorem \ref{thm-simplicial-poset} one gets Lemma 6.1 of \cite{Moci-A-tutte}. 
\begin{Lemma}[Lemma 6.1 of \cite{Moci-A-tutte}]
	Let $\mathcal{M}$ be a realizable matroid over $\mathbb{Z}$. We call $E_{\mathcal{M}}(y)=\sum_{A\subseteq [n]} (y-1)^{\#A - \operatorname{cork}(A)}$ the polynomial of the external activity of $\mathcal{M}$.
    Denote by $C_0$ the pair $(A, l) \in \TM$ such that $d(A)=0$. 
    Finally call $M_{A}$ the restriction of the matroid $\mathcal{M}$ to $(A,l)$.
    
    Then, 
    $$T_{\mathcal{M}}(1,y)=\sum_{(A,l)\in C_0} E_{M_{A}}(y).$$ 
\end{Lemma}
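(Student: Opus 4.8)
The plan is to compute $T_\mathcal{M}(1,y)$ from the corank--nullity formula by organising the subsets of $[n]$ according to the classical internal/external activities partition, and then to recognise the resulting basis-indexed sum as $\sum_{(A,l)\in C_0}E_{M_A}(y)$ by distributing each basis-block over the fibre $C_A$.

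First I would fix a linear order on $[n]$ and recall the activities partition of the Boolean lattice of the underlying matroid of $\mathcal{M}$: $2^{[n]}=\bigsqcup_{A\text{ basis}}\bigl[A\setminus\operatorname{IA}(A),\,A\cup\operatorname{EA}(A)\bigr]$, where $\operatorname{IA}(A)$ and $\operatorname{EA}(A)$ are the internally, resp.\ externally, active elements of the basis $A$ and $[X,Y]=\{Z:X\subseteq Z\subseteq Y\}$. This partition depends only on the underlying matroid, and the standard fundamental-circuit argument shows that if $S=(A\setminus I)\cup E$ lies in the block of $A$, with $I\subseteq\operatorname{IA}(A)$ and $E\subseteq\operatorname{EA}(A)$, then $\operatorname{cork}(S)=r-\#I$ and $\#S-\operatorname{cork}(S)=\#E$. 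Substituting this into $T_\mathcal{M}(x,y)=\sum_{S}m(S)(x-1)^{r-\operatorname{cork}(S)}(y-1)^{\#S-\operatorname{cork}(S)}$ and setting $x=1$ kills every block-term with $I\neq\emptyset$, so that
\[
T_\mathcal{M}(1,y)=\sum_{A\text{ basis}}\ \sum_{E\subseteq\operatorname{EA}(A)}m(A\cup E)\,(y-1)^{\#E}.
\]

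Next I would pass to the poset side. By Theorem \ref{thm-simplicial-poset}, $\TM$ is a simplicial poset whose facets are exactly the pairs $(A,l)$ with $A$ a basis and $l\in C_A$; thus $C_0=\bigsqcup_{A\text{ basis}}\{A\}\times C_A$, and $\#C_A=\#G_A=m(A)$ since a basis $A$ is spanning and hence $\mathcal{M}(A)=G_A$. Fix a basis $A$. In the realisation, the $m(A)$ elements of $C_A$ are the simultaneous solutions of the equations $z_i=1$, $i\in A$; for $E\subseteq\operatorname{EA}(A)$ the number of $l\in C_A$ that in addition kill every $z_e$, $e\in E$ --- equivalently, the number of points of $\bigcap_{i\in A\cup E}\{z_i=1\}$, a subset of $\bigcap_{i\in A}\{z_i=1\}$ --- equals $\#G_{A\cup E}=m(A\cup E)$. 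Since $M_A$, the restriction of $\mathcal{M}$ to the layer $(A,l)$, is the rank-$0$ matroid whose ground set is the set of externally active elements of $A$ passing through the point $(A,l)$, its activity polynomial is $E_{M_A}(y)=\sum_E(y-1)^{\#E}$, summed over exactly those $E\subseteq\operatorname{EA}(A)$, and therefore
\[
\sum_{l\in C_A}E_{M_A}(y)=\sum_{E\subseteq\operatorname{EA}(A)}\#\{l\in C_A: E\text{ passes through }(A,l)\}\,(y-1)^{\#E}=\sum_{E\subseteq\operatorname{EA}(A)}m(A\cup E)\,(y-1)^{\#E}.
\]

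Summing the last identity over all bases $A$ and comparing with the first display yields $T_\mathcal{M}(1,y)=\sum_{(A,l)\in C_0}E_{M_A}(y)$. I expect the only real difficulty to be the middle step: making precise, in the language of $\TM$ and of matroids over $\mathbb{Z}$, what ``the restriction $M_A$ of $\mathcal{M}$ to the layer $(A,l)$'' is, and checking the incidence count --- that the externally active elements of a basis $A$ distribute over the $m(A)$ sheets of $\TM$ lying above $A$ exactly according to the multiplicities $m(A\cup E)$. Everything around it, namely the activities expansion of the first step (valid verbatim for the arithmetic Tutte polynomial, since the partition and the exponents depend only on the underlying matroid) and the final bookkeeping, is routine.
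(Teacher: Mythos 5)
Your computational core is sound, and it is a genuinely different route from the paper's. The paper does not compute anything here: the lemma is presented as a corollary of Theorem \ref{thm-simplicial-poset} --- each interval $[(\emptyset,e),(A\cup\{b\},h)]$ of $\TM$ is boolean, so the realizable toric arrangement looks locally like a hyperplane arrangement --- and the actual identity is deferred to Moci's point-indexed double count in \cite{Moci-A-tutte}: write $T_{\mathcal{M}}(1,y)=\sum_{S\ \mathrm{spanning}} m(S)(y-1)^{\#S-r}$, observe that for spanning $S$ the multiplicity $m(S)$ counts the points $p$ with $S\subseteq S_p:=\{i:\chi_i(p)=1\}$, and swap the two sums to get $\sum_p T_{M_p}(1,y)$, with no activities needed. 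Your Crapo activities partition plus the fibre count $\#\{l\in C_A:\chi_e(l)=1\ \forall e\in E\}=m(A\cup E)$ is correct (you use the paper's convention $\operatorname{cork}=d(\emptyset)-d(\cdot)$, i.e.\ the matroid rank, consistently, and the fibre count is exactly where realizability enters), so your skeleton works; it is simply heavier machinery than the statement requires.

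The soft spot you flag yourself is, however, a real gap, and it is exactly where your argument and the statement part ways. ``The restriction of $\mathcal{M}$ to $(A,l)$'' is not a rank-zero matroid on the externally active elements of $A$ through the point: that object depends on the auxiliary linear order and is tailored to make your bookkeeping close. In Moci's Lemma 6.1 the matroid attached to a point $p$ is the full localization on $S_p$ with its genuine rank function, $E_p(y)$ is its external-activity polynomial $T_{M_p}(1,y)$ (so the sum defining $E$ runs over spanning subsets --- the restatement in the paper is loose on this point), and the index set is the set of zero-dimensional layers, not the facet pairs of $\TM$. With the natural reading of $M_{(A,l)}$ your facet-indexed sum would overcount: in Example \ref{ex:R-matroid-toric-Z-3} the point $(1,1)$ lies under the three facets $(\{1,2\},e)$, $(\{1,3\},e)$, $(\{2,3\},e)$, and the sum would give $3(y+2)+1$ instead of $y+3$. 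To finish, group the facets over a common point $p$: your inner sum for $(A,l)$ equals $y^{e_{M_p}(A)}$, because an element of $\operatorname{EA}(A)$ lying in $S_p$ is precisely an externally active element of $A$ regarded as a basis of $M_p$ (fundamental circuits are computed inside $A\cup\{e\}\subseteq S_p$, so they agree), the facets over $p$ biject with the bases of $M_p$, and $\sum_{A}y^{e_{M_p}(A)}=T_{M_p}(1,y)=E_p(y)$. With this last regrouping your computation does prove the lemma in its intended (Moci) form; without it you have established a true, order-dependent refinement rather than the stated identity.
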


\noindent
Indeed, the fact that each interval $[(\emptyset, e), (A\cup\{b\}, h)]$ is isomorphic to the boolean lattice $[\emptyset, A\cup\{b\}]$ implies that in the realizable arithmetic case, the toric arrangement associated looks locally as a hyperplane arrangement. 

By applying Theorem \ref{thm-simplicial-poset}, one can extend the main result to any realizable $\mathbb{Z}$-matroid.
To do this, we need the following technical definition.
Given an element $\sigma$ of a poset $P$ we denote the \emph{link} of $\sigma$ by $\operatorname{link}_P(\sigma)$:
	\[
    	\operatorname{link}_P(\sigma)=\{\tau\in P: \sigma \leq \tau\}\subseteq P.
    \]

\begin{TheoP}
	If $\mathcal{M}$ is a realizable $\mathbb{Z}$-matroid, then	$\TM$ is a disjoint union of $m(\emptyset)$ ($=\#G_\emptyset$) simplicial posets isomorphic to $\operatorname{link}_{\TM}(\emptyset, e)$.
\end{TheoP}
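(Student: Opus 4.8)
The plan is to decompose $\TM$ according to where the "bottom" torsion data of a chain lands, and then show each piece is isomorphic to $\operatorname{link}_{\TM}(\emptyset, e)$. First I would observe that every element $(A, l)$ of $\TM$ lies below a unique element of the form $(\emptyset, g)$ with $g \in C_\emptyset$: iterating Proposition \ref{prop-every-subset-is-there}, any $(A,l)$ covers some $(A', l')$ with $\#A' = \#A - 1$, and continuing down the boolean interval $[\emptyset, A]$ we reach an element $(\emptyset, g)$. By Proposition \ref{prop-the-covering-is-nice} the element reached at each rank of this descent is uniquely determined, so there is a well-defined map $\beta : \TM \to C_\emptyset$ sending $(A,l)$ to this minimal torsion $g$, and $\beta^{-1}(g)$ is an order ideal. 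Hence $\TM = \bigsqcup_{g \in C_\emptyset} \beta^{-1}(g)$ as a poset (no covering relation can cross between fibers, since a covering relation descends $\beta$-value to itself). Since $\#C_\emptyset = \#G_\emptyset = m(\emptyset)$, this already gives the decomposition into $m(\emptyset)$ pieces.

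Next I would identify $\beta^{-1}(e)$ with $\operatorname{link}_{\TM}(\emptyset, e)$. Unwinding the definition, $\operatorname{link}_{\TM}(\emptyset, e) = \{(A,l) : (\emptyset, e) \leq (A,l)\}$, and an element $(A,l)$ satisfies $(\emptyset, e) \leq (A,l)$ exactly when the descent described above terminates at $(\emptyset, e)$, i.e.\ exactly when $\beta(A,l) = e$. So $\beta^{-1}(e) = \operatorname{link}_{\TM}(\emptyset, e)$ on the nose, and being an order ideal containing $(\emptyset,e)$ as its bottom, Theorem \ref{thm-simplicial-poset} (or rather its proof, that each interval $[(\emptyset,e),(A\cup\{b\},h)]$ is boolean) shows it is a simplicial poset.

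Finally I would produce, for each $g \in C_\emptyset$, a poset isomorphism $\varphi_g : \beta^{-1}(e) \xrightarrow{\ \sim\ } \beta^{-1}(g)$. The natural candidate: for $(A, l) \in \beta^{-1}(e)$, "twist" the $C_A$-component by the image of $g$ under the iterated surjection $C_A \twoheadrightarrow C_\emptyset$ is the wrong direction; instead one should lift $g$. The clean way is to note that for each independent $A$ the surjection $\pi_A : C_A \twoheadrightarrow C_\emptyset$ has kernel $K_A$, and the fiber of $\beta$ over $g$ restricted to rank-$(\#A)$ elements with first coordinate $A$ is the coset $\pi_A^{-1}(g)$, which is a torsor over $K_A$; the map $C_A \to C_A$ given by multiplication by any fixed element $s_A \in \pi_A^{-1}(g)$ carries $\pi_A^{-1}(e)$ bijectively to $\pi_A^{-1}(g)$, and these multiplications are compatible with the restriction maps $\pi^{(A,b)}$ provided the $s_A$ are chosen compatibly (i.e.\ $\pi^{(A,b)}(s_{A\cup\{b\}}) = s_A$), which is possible by choosing them inductively on $\#A$ since each $\pi^{(A,b)}$ is surjective. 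Compatibility with the $\pi^{(A,b)}$ is precisely what makes $\varphi_g$ preserve covering relations, hence a poset isomorphism. I expect the main obstacle to be exactly this last step: checking that the twisting elements $s_A$ can be chosen coherently across all independent sets and that the resulting map genuinely respects the covering relation $\rhd$ — essentially a compatibility/cocycle bookkeeping over the boolean lattice of independent sets, which is where a careful inductive argument on $\#A$ is needed.
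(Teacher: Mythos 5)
Your decomposition step is essentially the paper's: the paper also splits $\TM$ according to the minimal elements $(\emptyset,c)$, identifies $\operatorname{link}_{\TM}(\emptyset,e)$ with $\operatorname{Gr}(\mathcal{M}')$ for the quotient matroid $\mathcal{M}'(A)=\nicefrac{\mathcal{M}(A)}{G_\emptyset}$ (so that simpliciality follows from Theorem \ref{thm-simplicial-poset}), and then asserts an isomorphism $\operatorname{link}_{\TM}(\emptyset,e)\to\operatorname{link}_{\TM}(\emptyset,c)$. Your $\beta$-fibration and the identification $\beta^{-1}(e)=\operatorname{link}_{\TM}(\emptyset,e)$ are the same skeleton. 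One small repair: the well-definedness of $\beta$ does not follow from Proposition \ref{prop-the-covering-is-nice} alone, which only gives uniqueness one rank down for a fixed subset; different descent paths pass through different subsets, and you need that the composite projections $C_A\to C_\emptyset$ are path-independent, which holds in the realized case because the canonical quotient maps commute.

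The genuine gap is in your last step. You need, for each $g\in C_\emptyset$, a family $s_A\in C_A$ over all independent $A$ with $s_\emptyset=g$ and $\pi^{(A,b)}(s_{A\cup\{b\}})=s_A$ for every covering pair, and you justify its existence by choosing inductively on $\#A$ ``since each $\pi^{(A,b)}$ is surjective.'' That induction does not go through as stated: at the stage of an independent set $B$ with $\#B=k+1$ you must produce a single $s_B\in C_B$ lying simultaneously in the preimages $\left(\pi^{(B\setminus b,\,b)}\right)^{-1}(s_{B\setminus b})$ for all $b\in B$, i.e.\ in an intersection of cosets of the kernels of several different projections; surjectivity of each individual projection only makes each coset nonempty, not their intersection. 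Dually, viewing $s_B$ as a character of $G_B$ extending the characters already fixed on the subgroups $G_{B\setminus b}\leq G_B$, you would at least need those characters to agree on the pairwise intersections of these subgroups, which may be strictly larger than $G_{B\setminus\{b,b'\}}$, while the induction hypothesis gives agreement only on $G_{B\setminus\{b,b'\}}$. Note also that one cannot shortcut this by a single multiplication by a lift of $g$: the surjection $C_A\twoheadrightarrow C_\emptyset$ need not split (for $\mathcal{M}(\emptyset)=\mathbb{Z}\times\nicefrac{\mathbb{Z}}{2\mathbb{Z}}$ and $z_1=(2,\bar 1)$ one gets $C_{\{1\}}\simeq\nicefrac{\mathbb{Z}}{4\mathbb{Z}}\twoheadrightarrow C_\emptyset\simeq\nicefrac{\mathbb{Z}}{2\mathbb{Z}}$). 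You correctly flag this coherence question as the main obstacle, but flagging it is not resolving it, and it is precisely the content of the isomorphism between the $m(\emptyset)$ pieces (the paper dispatches it tersely with the map $(E,l)\mapsto(E,cl)$, which presupposes exactly such a coherent multiplication). Until the compatible family, or some other construction of the fiber isomorphisms, is actually produced, the proposal is incomplete at its crucial point.
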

\begin{proof}
	If $\mathcal{M}(\emptyset)$ is a free group, we have already proved that the statement is true in Theorem \ref{thm-simplicial-poset}.
    %
    %\noindent
	If $\mathcal{M}(\emptyset)$ is not free, pick $c\in C_\emptyset$. Each pair $(\emptyset, c)$ is minimal in $\TM$.
    Moreover, there is a natural poset isomorphism from the elements of $\operatorname{link}_{\TM}(\emptyset, e)$ to the elements of $\operatorname{link}_{\TM}(\emptyset, c)$. 
    The isomorphism sends $(E, l)\in \operatorname{link}_{\TM}(\emptyset, e)$ to $(E, cl)\in \operatorname{link}_{\TM}(\emptyset, c)$.
    
    \noindent
    Finally, define for every $A\subseteq[n]$,  $\mathcal{M}'(A)=\nicefrac{\mathcal{M}(A)}{G_\emptyset}$. 
    This is a realizable $\mathbb{Z}$-matroid and $\mathcal{M}'(\emptyset)$ is free. Moreover, $\operatorname{Gr}(\mathcal{M}')=\operatorname{link}_{\TM}(\emptyset, e)$. 
    %
    %Then the statement follows by using Theorem \ref{thm-simplicial-poset} to the matroid $\mathcal{M}'$.
\end{proof}

\section{The Hilbert series of the face module}\label{sec-main-result}
In this section we show that the face module and the Grothendieck--Tutte polynomial of a realizable $\mathbb{Z}$-matroid are related as in the classical case.  
%
%From now on, we are going to assume that $m(\emptyset)=1$ or equivalently that $\mathcal{M}(\emptyset)$ is free. 

% \begin{Theorem} 
% 	Let $\mathcal{M}$ be a matroid over $\mathbb{Z}$ of rank $r$ with ground set $[n]$.
%    	%
%     Let $\TM$ be the p.o.set of cosets of $\mathcal{M}$.
    
%     Then, $\TM$ is shellable.
% \end{Theorem}
% \begin{proof}
% 	ciao
% \end{proof}

Recall that the face ring of a simplicial poset $P$ has been defined in Section \ref{sec-basic-notion} and it is denoted by $\mathbf{k}[P]$.
Theorem \ref{thm-simplicial-poset} shows that if $\mathcal{M}$ is representable matroid over $\mathbb{Z}$ with $\mathcal{M}(\emptyset)=\mathbb{Z}^{d}$, then $\TM$ is a simplicial poset.
Therefore, we might define the face ring of such matroid $\mathcal{M}$ as
\[
\mathbf{k}[\mathcal{M}]=\mathbf{k}[\operatorname{Gr}(\mathcal{M})].
\]

In the general realizable case $\mathcal{M}(\emptyset)=\mathbb{Z}^{d(\emptyset)}\times G_{\emptyset}$, Theorem \ref{thm-simplicial-poset-intro} ensures that $\TM$ is a union of $m(\emptyset)=\#G_\emptyset$ a simplicial posets.
The right algebraic structure is not anymore a ring, but a module.
Gluing all this fact together, the reader can make sense of the following definition.

\begin{defi}
The face module $\mathbf{k}[\mathcal{M}]$ of $\mathcal{M}$ is 
\[
	\mathbf{k}[\mathcal{M}]=\mathbf{k}[\operatorname{Gr}(\mathcal{M}')]^{m(\emptyset)},
\]
%$\mathbf{k}[\mathcal{M}]=\mathbf{k}[\operatorname{Gr}(\mathcal{M}')]^{m(\emptyset)}$, 
where $\mathcal{M}'$ is the matroid defined for every $A\subseteq [n]$ by $\mathcal{M}'(A)=\nicefrac{\mathcal{M}(A)}{G_\emptyset}$.
\end{defi}

\noindent
Note that $\mathbf{k}[\mathcal{M}]$ is a free module over the ring $\mathbf{k}[\operatorname{link}_{\TM}(\emptyset, e)]$.
In other words, $\mathbf{k}[\mathcal{M}]=\mathbf{k}[\operatorname{link}_{\TM}(\emptyset, e)]^{m(\emptyset)}$.
If $\mathcal{M}(\emptyset)$ is free than the face module has a ring structure, i.e. $\mathbf{k}[\mathcal{M}]=\mathbf{k}[\TM]$.
Finally, recall that the dual of a realizable $\mathbb{Z}$-matroid is still realizable (see Section 2 of \cite{Dadderio-Moci-Arith}).

\begin{TheoP}%\label{thm-GT-poly}
	If $\mathcal{M}$ is a realizable $\mathbb{Z}$-matroid of rank $r$, then
	\[
	\operatorname{Hilb}(\mathbf{k}[\mathcal{M}],t)=\frac{t^{r}}{(1-t)^{r}} T_{\mathcal{M}^*}(1, \nicefrac{1}{t}).
	\]
\end{TheoP}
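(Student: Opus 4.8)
The strategy is to reduce the statement to Stanley's formula (Theorem~\ref{Thm-Stanley}) for the Hilbert series of the face ring of a simplicial poset, applied to the simplicial poset $L:=\operatorname{link}_{\TM}(\emptyset,e)$, and then to match the resulting $\mathbf{h}$-polynomial with the specialization $t^r(1-t)^{-r}T_{\mathcal{M}^*}(1,\nicefrac1t)$. First I would use the structural results already proved: by Theorem~\ref{thm-simplicial-poset-intro} (together with the definition of $\mathbf{k}[\mathcal{M}]$), $\mathbf{k}[\mathcal{M}]=\mathbf{k}[L]^{m(\emptyset)}$ as graded $\mathbf{k}$-vector spaces, so
\[
\operatorname{Hilb}(\mathbf{k}[\mathcal{M}],t)=m(\emptyset)\cdot\operatorname{Hilb}(\mathbf{k}[L],t)=m(\emptyset)\cdot\frac{h_0+h_1t+\dots+h_rt^r}{(1-t)^r},
\]
where $(h_0,\dots,h_r)=\mathbf{h}(L)$ and $r=\operatorname{rk}(L)$ is the rank of $\mathcal{M}$. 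So the whole problem becomes: compute the $\mathbf{f}$-vector of $L$, convert it to the $\mathbf{h}$-vector via $\sum_i f_{i-1}(t-1)^{r-i}=\sum_i h_it^{r-i}$, and check that $m(\emptyset)\sum_i h_it^{r-i}$ equals $t^rT_{\mathcal{M}^*}(1,\nicefrac1t)$ after the substitution.

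\textbf{Key steps.} Step 1: identify $f_{i-1}(L)$. An element of $L$ of rank $i$ is a pair $(A,l)$ with $\#A=i$, $A$ independent in $\mathcal{M}\otimes\mathbb{Q}$ (i.e.\ $d(\emptyset)-d(A)=\#A$, equivalently $\operatorname{cork}(A)=\#A$), and $l\in C_A$ lying in the fiber over $e\in C_\emptyset$ under the iterated surjection $C_A\twoheadrightarrow C_\emptyset$. The number of such $l$ is $\#G_A/\#G_\emptyset=m(A)/m(\emptyset)$. Hence
\[
f_{i-1}(L)=\frac{1}{m(\emptyset)}\sum_{\substack{A\subseteq[n],\ \#A=i\\ \operatorname{cork}(A)=\#A}} m(A),
\]
so $m(\emptyset)f_{i-1}(L)=\sum_{\#A=i,\ \operatorname{cork}(A)=i}m(A)$. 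Step 2: plug this into the $\mathbf{h}$-to-$\mathbf{f}$ conversion:
\[
m(\emptyset)\sum_{i=0}^r h_it^{r-i}=m(\emptyset)\sum_{i=0}^r f_{i-1}(L)(t-1)^{r-i}=\sum_{\substack{A\subseteq[n]\\ \operatorname{cork}(A)=\#A}} m(A)(t-1)^{r-\#A}.
\]
Step 3: compare with the arithmetic Tutte polynomial. Using $(\ref{eq-Tutte-classical-dual})$, $T_{\mathcal{M}^*}(1,\nicefrac1t)=T_{\mathcal{M}}(\nicefrac1t,1)$, and by formula $(\ref{eq-Tutte-classical})$ the only surviving terms of $T_{\mathcal{M}}(x,1)$ are those $A$ with $\#A-\operatorname{cork}(A)=0$; for these $A$ the contribution is $m(A)(x-1)^{r-\operatorname{cork}(A)}=m(A)(x-1)^{r-\#A}$. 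Setting $x=\nicefrac1t$ and multiplying by $t^r$:
\[
t^rT_{\mathcal{M}}(\nicefrac1t,1)=\sum_{\substack{A\subseteq[n]\\ \operatorname{cork}(A)=\#A}} m(A)\,t^r(\nicefrac1t-1)^{r-\#A}=\sum_{\substack{A\subseteq[n]\\ \operatorname{cork}(A)=\#A}} m(A)(1-t)^{r-\#A}t^{\#A}.
\]
Step 4: reconcile the two expressions — the sum from Step 2 has $(t-1)^{r-\#A}$ while Step 3 gives $(1-t)^{r-\#A}t^{\#A}$. Here one must be careful: the correct normalization is that $\operatorname{Hilb}(\mathbf{k}[L],t)$ has numerator $\sum h_it^{r-i}$ but the $h$-vector is defined from $\sum f_{i-1}(t-1)^{r-i}$, and the sign/degree bookkeeping must be tracked exactly; I would redo Example~\ref{ex:basic-simplicial-poset-Hilbert-series} and Example~\ref{ex:R-matroid-toric-Z-1-poset-coset}/\ref{ex:tutte-R-matroid-toric-Z-1} as a sanity check to pin down the right form, since in the classical $\mathbf{k}$-matroid case this is exactly Theorem~\ref{Thm-bjorner-hilbert-tutte} (Björner), whose proof I can cite or imitate.

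\textbf{Main obstacle.} The only genuinely new content beyond the classical Björner computation is Step 1 — establishing $m(\emptyset)f_{i-1}(L)=\sum_{\#A=i,\ \operatorname{cork}(A)=i}m(A)$ — which requires knowing that the fiber of the iterated projection $C_A\twoheadrightarrow C_\emptyset$ has size exactly $m(A)/m(\emptyset)$, and that each qualifying subset $A$ contributes with the full multiplicity $m(A)$ rather than something smaller (this uses Propositions~\ref{prop-the-covering-is-nice} and~\ref{prop-every-subset-is-there} and the fact that the projections $C_{A\cup b}\twoheadrightarrow C_A$ compose consistently, so that $L$'s rank-$i$ elements over $A$ biject with $\ker(C_A\to C_\emptyset)$). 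Once $\mathbf{f}(L)$ is correctly computed, the remainder is the purely formal identity manipulation of Steps 2--4, identical in spirit to the proof of Theorem~\ref{Thm-bjorner-hilbert-tutte}; the potential pitfall there is merely keeping the $(1-t)$ versus $(t-1)$ signs and the powers of $t$ straight, best guarded against by checking the worked examples.
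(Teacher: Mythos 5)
Your proposal is correct and follows essentially the same route as the paper: apply Stanley's Theorem \ref{Thm-Stanley}, compute the $\mathbf{f}$-vector from the multiplicities $m(A)$ of the independent sets, and match against $T_{\mathcal{M}}(t,1)=T_{\mathcal{M}^*}(1,t)$ — your fiber count $m(A)/m(\emptyset)$ on the link is just the explicit form of the paper's reduction to the case $m(\emptyset)=1$ via the quotient matroid $\mathcal{M}'$. The bookkeeping you leave open in Step 4 closes exactly as in the paper: Stanley's numerator is $\sum_i h_i t^i = t^r\sum_i h_i (1/t)^{r-i}$, so substituting $t\mapsto 1/t$ in your Step 2 identity and multiplying by $t^r$ gives precisely $\sum_A m(A)(1-t)^{r-\#A}t^{\#A}=t^r T_{\mathcal{M}}(1/t,1)$ from Step 3.
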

\begin{proof}
	For the additivity property of the Hilbert series, it is enough to show that the theorem is true in the case $m(\emptyset)=1$.
    $\TM$ is a simplicial poset because of Theorem \ref{thm-simplicial-poset}. One defines its $\mathbf{h}$-vector as 
    $$\sum_{i=0}^{r} f_{i-1}(\TM) (t-1)^{r-i}=\sum_{i=0}^r h_{i}(\TM) t^{r-i}.$$
    We observe that 
    $$f_{i-1}(\TM)=\sum_{\# A=i}m(A),$$ 
    where $m(A)$ is the order of the torsion part of $\mathcal{M}(A)$.
    Hence
    \[
    	\sum_{i=0}^r h_{i}(\TM)t^{r-i}=\sum_{A\in[n] }m(A)(t-1)^{r-d(A)}=T_{\mathcal{M}}(t,1).
    \]
    Therefore, 
    $t^{r}T_{\mathcal{M}}(\nicefrac{1}{t},1)=\sum_{i=0}^r h_{i}(\TM)t^{i}$.
    We now apply Theorem \ref{Thm-Stanley} together with (\ref{eq-Tutte-classical-dual}) to get the result.
\end{proof}
% \begin{proof}
% 	Assume for now that $\mathcal{M}(\emptyset)=\mathbb{Z}^d$.
% 	%
% 	Because of Theorem \ref{thm-simplicial-poset}, the $\TM$ is a simplicial poset and because of Theorem \ref{Thm-Stanley} one gets:
% 	\[
%     \operatorname{Hilb}(\mathbf{k}[\mathcal{M}], t)=\frac{h_0 + h_1 t + \cdots + h_r t^r}{(1-t)^r}.
%   	\]
%     \noindent
%     Remark that $\sum_{i=0}^{r} f_{i-1}(t-1)^{r-i}=\sum_{i=0}^r h_{i}t^{r-i}$ and observe that $f_{i-1}=\sum_{\# S=i}m(S)$.
%     %
%     Thus one check that 
%     \[
%     	\sum_{i=0}^r h_{i}t^{r-i}=\sum_{S\in[n] }m(S)(t-1)^{r-\operatorname{rk}(A)}=T_{\mathcal{M}}(1,y).
%     \]
% \end{proof}   
    
% \begin{Corollary}\label{cor-GT-poly-torsion}
% 	Let $\mathcal{M}$ be a $\mathbb{Z}$-matroid of rank $r$ with ground set $[n]$. 
%     %
%     %Assume $\mathcal{M}(\emptyset)$ has no torsion.
%    	%
%     Let $\TM$ be the p.o.set of cosets of $\mathcal{M}$ and denote its face poset by $\mathbf{k}[\mathcal{M}]=\mathbf{k}[\TM]$.
%     %
%   	Then 
%   	\[
%     	\operatorname{Hilb}(\mathbf{k}[\mathcal{M}],t)=\frac{t^{n-r}}{(1-t)^{n-r}} T_{\mathcal{M}}(1, \nicefrac{1}{t}).
%     \]
% \end{Corollary}

% \begin{proof}
%     If $\mathcal{M}(\emptyset)$ is not free, by Corollary \ref{cor-main-result}, $\TM$ is a union of disjoint isomorphic simplicial poset and we apply Theorem \ref{thm-GT-poly}. 
% \end{proof}

\begin{Remark}\label{remark-conjecture}
The proof of the above theorem works for every simplicial partial order of the set in Definition \ref{def-set-poset-of-torsion}.
We conjecture Theorem \ref{thm-main-result} is true for every matroid over $\mathbb{Z}$ and the only obstacle to this result is hidden in the nature of the {\em canonical projections}. 
Indeed, for a non realizable $\mathbb{Z}$-matroid, it is not clear if there is a unique simplicial order of the set in Definition \ref{def-set-poset-of-torsion}, that respects Definition \ref{def-covering-arithmetic}.
\end{Remark}

Along all the paper we have played with two toy examples: $\mathcal{M}_1$ defined in Example \ref{ex:R-matroid-toric-Z-1} and $\mathcal{M}_2$ defined in Example \ref{ex:R-matroid-toric-Z-2}. In Table \ref{table:examples} we summarize where to find the calculations related: the computations of the poset, the Tutte polynomial, the face ring, etc. 

\begin{table}[htb]
\begin{center}
\scalebox{0.9}{
\begin{tabular}{cccccccc}
 & $\mathcal{M}$ & $T_{\mathcal{M}}$ & $\TM$  & $\mathbf{h}$, $\mathbf{f}$ & $\mathbf{k}[\mathcal{M}]$ & Hilb & Fig.\\
$\mathcal{M}_1$ & Ex. \ref{ex:R-matroid-toric-Z-1} & Ex. \ref{ex:tutte-R-matroid-toric-Z-1} & Ex. \ref{ex:R-matroid-toric-Z-1-poset-coset}  & Ex. \ref{ex:basic-simplicial-Fig2-a-f-h-computation} & Ex. \ref{ex:basic-simplicial-Fig2-a} & Ex. \ref{ex:basic-simplicial-Fig2-a-Hilbert-series}& Fig. \ref{fig:poset-coset-example}.a)\\
$\mathcal{M}_2$ & Ex. \ref{ex:R-matroid-toric-Z-2} & Ex. \ref{ex:tutte-R-matroid-toric-Z-2} & Ex. \ref{ex:R-matroid-toric-Z-1-poset-coset}  & Ex. \ref{ex:basic-simplicial-poset-f-h-computation} & Ex. \ref{ex:basic-simplicial-poset-ideal-computation} & Ex. \ref{ex:basic-simplicial-poset-Hilbert-series}& Fig. \ref{fig:poset-coset-example}.b)
\end{tabular}}
\end{center}
\caption{The computations of the toy examples $\mathcal{M}_1$ and $\mathcal{M}_2$.}\label{table:examples}
\end{table}

We now provide a more substantial example where to verify the just proved Theorem \ref{thm-main-result}. 

\begin{Example}\label{ex:R-matroid-toric-Z-3}
	Let $n=3$ and we define $\mathcal{M}$ as follows:
    
    \begin{center}
    \scalebox{0.75}{%
   	\begin{tikzpicture}[scale=1, align=center]
            \node (1) at (-6,0) {$\mathcal{M}(\{1,2\})=\nicefrac{\mathbb{Z}^{2}}{((1,1),(1,-1))}$};
            \node (2) at (0,0) {$\mathcal{M}(\{2,3\})=\nicefrac{\mathbb{Z}^{2}}{((1,-1),(1,0))}$};
            \node (3) at (+6,0) {$\mathcal{M}(\{1,3\})=\nicefrac{\mathbb{Z}^{2}}{((1,1),(1,0))}$};
            
            \node (123) at (0,2) {$\mathcal{M}(\{1,2,3\})=\nicefrac{\mathbb{Z}^{2}}{((1,1),(1,-1),(1,0))}$};

 			\node (a) at (6, -2) {$\mathcal{M}(\{3\})=\nicefrac{\mathbb{Z}^{2}}{(1,0)}$};
            \node (b) at (0, -2) {$\mathcal{M}(\{2\})=\nicefrac{\mathbb{Z}^{2}}{(1,-1)}$};
 			\node (c) at (-6, -2) {$\mathcal{M}(\{1\})=\nicefrac{\mathbb{Z}^{2}}{(1,1)}$};
 
            \node (0) at (0, -4) {$\mathcal{M}(\emptyset)=\mathbb{Z}^{2}$};

			\draw (a) -- (2);
            \draw (a) -- (3);
            \draw (b) -- (1);
 	        \draw (b) -- (2);
            \draw (c) -- (1);
 	        \draw (c) -- (3);

			\draw (1) -- (123);
            \draw (2) -- (123);
			\draw (3) -- (123);

            \draw (a) -- (0);
            \draw (b) -- (0);
            \draw (c) -- (0);
        \end{tikzpicture}
   	}\end{center}
    
%    \renewcommand\arraystretch{1} 
%     Set $\mathcal{M}(\emptyset)=\mathbb{Z}^{2}$, $\mathcal{M}(\{1\})=\nicefrac{\mathbb{Z}^{2}}{(1,1)}$, $\mathcal{M}(\{2\})=\nicefrac{\mathbb{Z}^{2}}{(1,-1)}$, $\mathcal{M}(\{3\})=\nicefrac{\mathbb{Z}^{2}}{(1,0)}$, $\mathcal{M}(\{1,2\})=\nicefrac{\mathbb{Z}^{2}}{((1,1),(1,-1))}$, $\mathcal{M}(\{1,3\})=\nicefrac{\mathbb{Z}^{2}}{((1,1),(1,0))}$, $\mathcal{M}(\{2,3\})=\nicefrac{\mathbb{Z}^{2}}{((1,-1),(1,0))}$ and $\mathcal{M}(\{1,2,3\})=\nicefrac{\mathbb{Z}^{2}}{((1,1),(1,-1),(1,0))}$.
% \end{Example}

% \begin{Example}\label{ex:tutte-R-matroid-toric-Z-3}
	
    \noindent
	Let us compute $T_{\mathcal{M}}$. % for the $\mathbb{Z}$-matroid given in Example \ref{ex:R-matroid-toric-Z-3}.
    We list the contribute in $(\ref{eq-Tutte-classical})$ for each subset:
	\begin{center}
	\begin{tabular}{cl}
    $\emptyset$ 			& $(x-1)^2$\\
    $\{1\},\{2\},\{3\}$ 	& $(x-1)$\\
    $\{1,2\}$		 		& $2$\\
    $\{1,3\}$		 		& $1$\\
    $\{2,3\}$		 		& $1$\\
    $\{1,2,3\}$		 		& $(y-1)$ 
    \end{tabular}
   	\end{center}
    
    \noindent
    Thus, we get $T_{\mathcal{M}}(x,y)=x^2+x+y+1$.
% \end{Example}

% \begin{Example}\label{ex:R-matroid-toric-Z-3-poset-coset}
%	Let us focus on the matroid with ground set $[3]$ presented in Example \ref{ex:R-matroid-toric-Z-3}. 
    %
%    In this example the condition $\operatorname{cork}(S)=\#S$ plays a role.
    
    Now we construct $\TM$.
    We start by observing that the matroid in Example \ref{ex:R-matroid-toric-Z-2} is a submatroid of $\mathcal{M}$. We have already studied this submatroid and therefore we do not need to explain the covering relation among $(\emptyset, e)$, $(\{1\}, e)$, $(\{2\}, e)$, $(\{1,2\}, e)$, and $(\{1,2\}, \zeta)$.
    
    \noindent
    In $\TM$, we also find $(\{3\}, e)$, $(\{1,3\}, e)$, and $(\{2,3\}, e)$. 
    We remark that the subset $[3]$ does not appear in the poset, because $2=\operatorname{cork}([3]) \neq \#[3]=3$.
    %
    %\noindent
    Thus, it remains to study which elements are covered by the subsets containing $3$.
    Readily, $(\{3\}, e)$ covers $(\emptyset, e)$.
    
    \noindent
    Since $C_{\{2,3\}}$, $C_{\{1,3\}}$ are trivial groups, then $(\{1,3\}, e)$ covers $(\{1\}, e)$ and $(\{3\}, e)$, and similarly $(\{2,3\}, e)$ coves $(\{2\}, e)$ and $(\{3\}, e)$.
    Figure \ref{fig:poset-coset-example}.c) shows $\TM$.
    
   	Using Macaulay2 \cite{M2}, we compute the Hilbert series of the face ring:
    \[
    	\operatorname{Hilb}(\mathbf{k}[\mathcal{M}],t)=\frac{1+t+2t^2}{(1-t)^2}.
    \]
    
    Let us focus on the dual matroid; one can easily compute that 
    \begin{center}
    \scalebox{0.75}{%
    \begin{tikzpicture}[scale=1, align=center]
            \node (1) at (-6,0) {$\mathcal{M}^*(\{1,2\})=e$};
            \node (2) at (0,0) {$\mathcal{M}^*(\{2,3\})=e$};
            \node (3) at (+6,0) {$\mathcal{M}^*(\{1,3\})=e$};
            
            \node (123) at (0,2) {$\mathcal{M}^*(\{1,2,3\})=e$};

 			\node (a) at (6, -2) {$\mathcal{M}^*(\{3\})=\nicefrac{\mathbb{Z}}{2\mathbb{Z}}$};
            \node (b) at (0, -2) {$\mathcal{M}^*(\{2\})=e$};
 			\node (c) at (-6, -2) {$\mathcal{M}^*(\{1\})=e$};
 
            \node (0) at (0, -4) {$\mathcal{M}^*(\emptyset)=\mathbb{Z}$};

			\draw (a) -- (2);
            \draw (a) -- (3);
            \draw (b) -- (1);
 	        \draw (b) -- (2);
            \draw (c) -- (1);
 	        \draw (c) -- (3);

			\draw (1) -- (123);
            \draw (2) -- (123);
			\draw (3) -- (123);

            \draw (a) -- (0);
            \draw (b) -- (0);
            \draw (c) -- (0);
        \end{tikzpicture}}
   	\end{center}
    
    \noindent
    By duality $T_{\mathcal{M}^*}(x,y)=y^2+y+x+1$ and by trivial computation, 
  	\[
    	\operatorname{Hilb}(\mathbf{k}[\mathcal{M}^*],t)=\frac{1+3t}{(1-t)}.
    \]
    
    %\noindent
    %We leave to the reader to verify Theorem \ref{thm-main-result}.
\end{Example}

%There is still to prove that 
%
%
%\begin{Thm}[cite someone]
%  $\hilb(\mathbf{k}[M],t)=\frac{t^{\#E-d} \aritutte_M(\nicefrac{1}{t}, 1)}{(1-t)^{\#E-d}}$.
%\end{Thm}

% \section{The M\'alaga variety}

% \begin{defi}
%   Let $X\subseteq \Z^d$ be a list of vectors. The M\'alaga variety $\malaga_X$ is defined as
  
%   to be filled in by Ivan
% \end{defi}

% \begin{Theorem}
%     Let $X\subseteq \Z^d$ be a list of vectors.
%     Then the arithmetic Stanley-Reisner ring is isomorphic to the cohomology of  $\malaga_X$.
% \end{Theorem}
% \begin{proof}
%   to be filled in by Ivan
% \end{proof}

% \begin{Remark}
% This paper could be related to the M\'alaga variety:
% On the integral cohomology of toric varieties,
% Jin Hong Kim,
% \url{http://www.worldscientific.com/doi/10.1142/S0219498816500328}
% \end{Remark}

% \section{Arithmetic $h$-vectors}

%  One should note that Aaron Dall, a guy from Barcelona (not M\'alaga!) has studied arithmetic $h$-vectors in chapter three of his thesis.
%  So, apart from Lenz and Lenz--Martino, there is now a third reference on this topic. We should read his thesis at some point.

\bibliographystyle{alpha}
\bibliography{biblio}

\begin{thebibliography}{Moc12b}

\bibitem[CD17]{Delucchi-Callegaro-toric}
Filippo Callegaro and Emanuele Delucchi.
\newblock The integer cohomology algebra of toric arrangements.
\newblock {\em Advances in Mathematics}, 313:746 -- 802, 2017.

\bibitem[CG17]{DeC-Gaiffi-Projective-Wonderful}
Corrado~De Concini and Giovanni Gaiffi.
\newblock Projective wonderful models for toric arrangements.
\newblock {\em Advances in Mathematics}, 2017.

\bibitem[DCP08]{DC-P-Box}
C.~De~Concini and C.~Procesi.
\newblock Hyperplane arrangements and box splines.
\newblock {\em Michigan Math. J.}, 57:201--225, 2008.
\newblock With an appendix by A. Bj\"orner, Special volume in honor of Melvin
  Hochster.

\bibitem[DM13]{Dadderio-Moci-Arith}
Michele D'Adderio and Luca Moci.
\newblock Arithmetic matroids, the {T}utte polynomial and toric arrangements.
\newblock {\em Adv. Math.}, 232:335--367, 2013.

\bibitem[Eke09a]{Ekedahl-inv}
Torsten Ekedahl.
\newblock A geometric invariant of a finite group.
\newblock arXiv:0903.3148v1, 2009.

\bibitem[Eke09b]{EkedahlStack}
Torsten Ekedahl.
\newblock The {G}rothendieck group of algebraic stacks.
\newblock arXiv:0903.3143v2, 2009.

\bibitem[FM16]{FinkMoci}
Alex Fink and Luca Moci.
\newblock Matroids over a ring.
\newblock {\em J. Eur. Math. Soc. (JEMS)}, 18(4):681--731, 2016.

\bibitem[GS]{M2}
Daniel~R. Grayson and Michael~E. Stillman.
\newblock Macaulay2, a software system for research in algebraic geometry.
\newblock Available at \url{http://www.math.uiuc.edu/Macaulay2/}.

\bibitem[Mar16]{Martino-eke}
Ivan Martino.
\newblock The {E}kedahl invariants for finite groups.
\newblock {\em J. Pure Appl. Algebra}, 220(4):1294--1309, 2016.

\bibitem[Mar17]{Martino-eke-intro}
Ivan Martino.
\newblock Introduction to the {E}kedahl {I}nvariants.
\newblock {\em MATH. SCAND.}, 120:211–224a, 2017.

\bibitem[Moc12a]{Moci-A-tutte}
Luca Moci.
\newblock A {T}utte polynomial for toric arrangements.
\newblock {\em Trans. Amer. Math. Soc.}, 364(2):1067--1088, 2012.

\bibitem[Moc12b]{Moci-Wonderful-Model}
Luca Moci.
\newblock Wonderful models for toric arrangements.
\newblock {\em Int. Math. Res. Not. IMRN}, (1):213--238, 2012.

\bibitem[Sta91]{Stanley-f-vector}
Richard~P. Stanley.
\newblock {$f$}-vectors and {$h$}-vectors of simplicial posets.
\newblock {\em J. Pure Appl. Algebra}, 71(2-3):319--331, 1991.

\end{thebibliography}

%signature
\vspace{0.5cm}
 
\noindent
 {\scshape Ivan Martino}\\
 {\scshape Department of Mathematics, Northeastern University,\\ Boston, MA 02115, USA}.\\
 {\itshape E-mail address}: \texttt{i.martino@northeastern.edu}
\end{document}